\newtheorem{prop}{Proposition}
\newtheorem{thm}{Theorem}
\newtheorem{lemma}{Lemma}
\newtheorem{fact}{Fact}
\newtheorem{cor}{Corollary}
\theoremstyle{definition}\newtheorem{definition}{Definition}
\DeclareMathOperator*{\E}{\mathbb{E}}  
\newcommand{\nats}{\mathbb{N}}
\newcommand{\Ind}[1]{\mathbf{1}\left[#1\right]}
\newcommand{\Prob}[1]{\Pr{\left[#1\right]}}
\newcommand{\dist}{p}         
\newcommand{\distr}{(p)_r}         
\newcommand{\distq}{q}        
\newcommand{\balls}{m}        
\newcommand{\bins}{n}         
\newcommand{\binsr}{r}         
\newcommand{\load}{k}         
\newcommand{\wait}{T_{\load}} 
\newcommand{\maxload}{B_{\balls}} 
\newcommand{\maxloadr}{B_{\binsr,\balls}} 
\newcommand{\loadrp}{N_{\binsr+1,m}} 
\newcommand{\maxloadparam}[2]{\maxload^{#1,#2}}
\newcommand{\col}{C_{\load}} 
\newcommand{\bino}{\operatorname{Binom}} 
\newcommand{\loadnorm}{\|\dist\|_{\load}}
\newcommand{\qnorm}{\|\distq\|_{\load}}
\newcommand{\loadnormr}{\|\distr\|_{\load}}
\newcommand{\simplex}{\Delta_{\bins}}
\newcommand{\myrho}{\rho_{\balls,\load}}
\begin{document}
\title{Balls and Bins - Simple Concentration Bounds}
\author{Ernst Schulte-Geers \and Bo Waggoner}
\date{\today}
\maketitle

\begin{abstract}
  Concentration bounds are given for throwing balls into bins independently according to a distribution $\dist$.
  The probability of a $\load$-loaded bin after $\balls$ balls is shown to be controlled on both sides by $\myrho \coloneqq \frac{\balls \loadnorm}{\load}$.
  This gives concentration inequalities for the maximum load as well as for the waiting time until a $\load$-loaded bin.
\end{abstract}

\section{Introduction}
In a classic problem, balls are thrown randomly into $\bins$ bins.
Each ball selects a bin independently from a probability distribution $\dist$, where $\dist_i \geq 0$ and $\sum_{i=1}^{\bins} \dist_i = 1$.
The set of probability distributions on bins is denoted $\simplex$.
We seek non-asymptotic probability bounds on the max-loaded bin, in terms of two quantities:
\begin{itemize}
  \item $\maxload$, the maximum load after throwing $\balls$ balls (i.e. the maximum entry of a multinomial); and
  \item $\wait$, the number of trials until the maximum load is $\load$.
\end{itemize}

Naturally, much research in this vein exists (Section \ref{sec:related-work}), particularly for the uniform distribution.
This work contributes by giving nonasymptotic concentration bounds that aim to be user-friendly; and by showing that these bounds are controlled by the quantity
\begin{equation}
  \myrho \coloneqq \frac{\balls \loadnorm}{\load} . \label{eqn:rho}
\end{equation}
In particular, we observe a phase change in the likelihood of a $\load$-loaded bin as $\myrho$ changes by a small constant factor around $\myrho = 1$.
This gives concentration results for $\maxload$ around the quantity $\load^*$ where $\rho_{\balls,\load^*} = 1$; and for $\wait$ around $\balls^*$ where $\rho_{\balls^*,\load} = 1$.

\subsection{Results}
Let $\bino(\balls, \alpha)$ denote a Binomial random variable with $\balls$ trials and $\alpha$ probability of success.
We state most results in terms of $\maxload$, but a reformulation for $\wait$ is immediate by the observation that $\Pr[\wait \leq \balls] = \Pr[\maxload \geq \load]$.
\begin{thm} \label{thm:main}
  Let integers $\balls \geq 0$, $\bins \geq 1$, and $\load \geq 1$ be given.
  Let $\dist \in \simplex$ be given and throw balls into $\bins$ bins indepedently with distribution $\dist$.
  Let $\maxload$ be the maximum load after $\balls$ trials.
  Then
    \[ \Pr[\bino(\balls, \loadnorm) \geq \load] ~\leq~ \Pr[\maxload \geq \load] ~\leq~ {\balls \choose \load}\loadnorm^{\load} . \]
\end{thm}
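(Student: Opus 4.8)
For the upper bound I would use the first–moment (union) bound. Write the event $\{\maxload \ge \load\}$ as the union, over bins $i$ and over $\load$-subsets $S$ of the balls, of the events ``every ball in $S$ lands in bin $i$.'' For fixed $i$ and $S$ this event has probability $\dist_i^{\load}$, and there are $\binom{\balls}{\load}$ choices of $S$, so a union bound gives
\[ \Pr[\maxload \ge \load] \le \binom{\balls}{\load}\sum_{i=1}^{\bins}\dist_i^{\load} = \binom{\balls}{\load}\loadnorm^{\load}, \]
using $\loadnorm^{\load}=\sum_i \dist_i^{\load}$. This step is routine and presents no obstacle.

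For the lower bound, set $q:=\loadnorm$, so $q^{\load}=\sum_i \dist_i^{\load}$, and recast the claim in complementary form as $\Pr[\maxload < \load] \le \Pr[\bino(\balls,q) < \load]$. I would first pass to exponential generating functions. Writing $E(y):=\sum_{a=0}^{\load-1} y^a/a!$ for the truncated exponential, the multinomial formula yields
\[ \Pr[\maxload < \load] = \balls!\,[x^{\balls}]\prod_{i=1}^{\bins} E(\dist_i x), \]
since a monomial $\prod_i \dist_i^{a_i}/a_i!$ survives exactly when $\sum_i a_i=\balls$ and every bin load $a_i$ is at most $\load-1$. A direct computation (equivalently, reading off the ``one constrained bin of rate $q$ plus an unconstrained remainder of rate $1-q$'' split) gives the companion identity
\[ \Pr[\bino(\balls,q) < \load] = \balls!\,[x^{\balls}]\,e^{(1-q)x}\,E(qx). \]

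The lower bound thus reduces to the single coefficient inequality $[x^{\balls}]\prod_i E(\dist_i x) \le [x^{\balls}]\,e^{(1-q)x}E(qx)$, and I would aim to prove the stronger \emph{coefficientwise} statement that $e^{(1-q)x}E(qx)-\prod_i E(\dist_i x)$ has nonnegative coefficients. The engine should be the norm monotonicity $\|\dist\|_r \le \|\dist\|_{\load}=q$ for $r\ge\load$, i.e. $\sum_i \dist_i^{r}\le q^{r}$: this is precisely what is needed to replace the symmetric power sums appearing on the left by powers of $q$ on the right. As a sanity check, for $\load=2$ one has $\prod_i(1+\dist_i x)$ with coefficients the elementary symmetric polynomials of $\dist$, and a term-by-term comparison reduces exactly to $\sum_i \dist_i^{r}\le q^{r}$ for each $r$, with equality through order $x^2$.

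The hard part is this coefficientwise inequality for general $\load$. The coefficient $[x^N]\prod_i E(\dist_i x)$ is a symmetric function in which each $\dist_i$ appears only to powers $<\load$, so it is not literally a combination of the power sums $\sum_i\dist_i^{r}$ with $r\ge\load$, and the reduction to norm monotonicity is not termwise; one cannot simply ``reduce to the coefficientwise case,'' since that family of inequalities is equivalent to the theorem for all $\balls$ at once. I would attack it either by induction on the number of bins $\bins$, peeling one bin and tracking how both $\prod_i E(\dist_i x)$ and $q$ move under merging or splitting, or by a majorization argument showing the left-hand product is largest when the mass is maximally spread subject to $\|\dist\|_{\load}=q$. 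A reassuring consistency check for either route is that the relevant factorial moments agree to first order and are dominated thereafter: for any family of $\load$-subsets, the monochromatic-collision probability $\prod_\ell \|\dist\|_{c_\ell}^{c_\ell}$, where the $c_\ell\ge\load$ are the sizes of the ball-sharing components, is at most the Binomial value $q^{\sum_\ell c_\ell}$, again by $\|\dist\|_{c_\ell}\le q$ (though the alternating signs in inclusion–exclusion mean this moment comparison alone does not close the argument, which is why the generating-function route is preferable).
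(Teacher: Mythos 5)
Your upper bound is correct and is essentially the paper's argument: summing $\dist_i^{\load}$ over bins $i$ and over $\binom{\balls}{\load}$ ball-subsets is exactly the first-moment bound on the number of $\load$-way collisions, so nothing further to say there.

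For the lower bound there is a genuine gap: you set the problem up but do not prove it. Your two generating-function identities are correct, and the reformulation as the coefficient inequality $[x^{\balls}]\prod_i E(\dist_i x) \le [x^{\balls}]\,e^{(1-\loadnorm)x}E(\loadnorm x)$ is faithful --- indeed the paper's own remark credits a generating-function proof of this type as the origin of its argument. But as you yourself concede, this reformulation is equivalent to the theorem for all $\balls$ at once, the reduction to the power-sum inequalities $\sum_i\dist_i^r\le\loadnorm^r$ is not termwise, and your proposed routes (``induction on $\bins$, peeling one bin'' or ``a majorization argument'') are named but not executed. The entire content of the theorem lives in the step you leave open. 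Concretely, what is missing is the two-bin comparison: for $\bins=2$, replacing $(\dist_1,\dist_2)$ by $\bigl(1-\|(\dist_1,\dist_2)\|_{\load},\,\|(\dist_1,\dist_2)\|_{\load}\bigr)$ and tracking only the load of the second bin can only decrease $\Pr[\maxload\ge\load]$. The paper proves this by rewriting both binomial tails via the identity $\Pr[\bino(\balls,\alpha)\le t]=(1-\alpha)^{\balls-t}\sum_{j=0}^{t}\binom{\balls-t-1+j}{j}\alpha^j$ and then using the pointwise inequality $(\dist_1^{\load}+\dist_2^{\load})(1-\loadnorm)^j\le\dist_1^{\load}\dist_2^j+\dist_2^{\load}\dist_1^j$, which follows from $1-\loadnorm\le\min\{\dist_1,\dist_2\}$; a conditioning argument (on the number of balls landing in the two merged bins) then lifts this to one merging step inside an $\bins$-bin distribution, and iterating collapses everything into a single bin of mass $\loadnorm$. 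Without this lemma, or a completed version of one of your sketched alternatives, the lower bound is not established. (Your $\load=2$ ``sanity check'' also overstates the case: the coefficients of $\prod_i(1+\dist_i x)$ are elementary symmetric polynomials, which are related to power sums only through Newton's identities with alternating signs, so the comparison does not reduce term-by-term to $\sum_i\dist_i^r\le\loadnorm^r$ even there.)
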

The proofs of the upper and lower bounds appear in Sections \ref{sec:upper} and \ref{sec:lower} respectively.
Section \ref{sec:inf} generalizes the result to the case where $\bins = \aleph_{0}$ and we are interested in the maximum load of any subset of bins.
Illustrations of the bounds' tightness in simulations appear in Section \ref{sec:sims}.

We next leverage Theorem \ref{thm:main} to obtain more specific tail bounds.
The proofs of the following corollaries appear in Section \ref{sec:cor-proofs}.
Throughout, recall that $\Pr[\wait \leq \balls] = \Pr[\maxload \geq \load]$ and recall (\ref{eqn:rho}), the definition of $\myrho$.
\begin{cor} \label{cor:rho-tail}
  Throw $\balls \geq 1$ balls into $\bins \geq 2$ bins independently according to $\dist \in \simplex$.
  For any $\load \in [1,\infty)$:
  \begin{align*}
    \Prob{\maxload \geq \load} &\leq \exp\left(- \load ~ \ln(\tfrac{1}{e \cdot \myrho})\right)                  & \text{if $\myrho < 1/e$; and}  \\
    \Prob{\maxload \geq \load} &\geq 1 - \exp\left(- \load ~ \left(\myrho - \ln(e \cdot \myrho)\right) \right)  & \text{if $\myrho > 1$} .
  \end{align*}

\end{cor}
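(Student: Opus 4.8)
The plan is to derive both inequalities directly from Theorem~\ref{thm:main}, using its two sides separately. For the upper bound I would start from the right-hand estimate $\Prob{\maxload \geq \load} \leq \binom{\balls}{\load}\loadnorm^{\load}$ and apply the standard binomial bound $\binom{\balls}{\load} \leq (e\balls/\load)^{\load}$. This turns the right-hand side into $\bigl(\tfrac{e\balls\loadnorm}{\load}\bigr)^{\load} = (e\myrho)^{\load}$, where I recognize the definition \eqref{eqn:rho}. Rewriting $(e\myrho)^{\load} = \exp\bigl(-\load\ln(\tfrac{1}{e\myrho})\bigr)$ gives the claim; the hypothesis $\myrho < 1/e$ is exactly what forces $e\myrho < 1$, so the exponent is positive and the bound is nontrivial.

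For the lower bound I would use the left-hand estimate $\Prob{\maxload \geq \load} \geq \Prob{\bino(\balls,\loadnorm) \geq \load}$ and reduce to a lower-tail bound on $X := \bino(\balls,\loadnorm)$, whose mean is $\mu = \balls\loadnorm = \load\myrho$. Since $\Prob{X \geq \load} = 1 - \Prob{X \leq \load-1} \geq 1 - \Prob{X \leq \load}$, it suffices to show $\Prob{X \leq \load} \leq \exp\bigl(-\load(\myrho - \ln(e\myrho))\bigr)$. I would get this from a Chernoff argument: for $t > 0$, $\Prob{X \leq \load} \leq e^{t\load}\,\E[e^{-tX}] \leq \exp\bigl(t\load - \mu(1 - e^{-t})\bigr)$, using $1 + x \leq e^x$ to bound the moment generating function $(1 - \loadnorm(1-e^{-t}))^{\balls}$. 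Optimizing at $t = \ln(\mu/\load)$ — which is positive precisely because $\myrho > 1$ forces $\mu > \load$ — yields the exponent $\mu - \load - \load\ln(\mu/\load) = \load(\myrho - 1 - \ln\myrho) = \load(\myrho - \ln(e\myrho))$, as required.

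The individual steps are routine, and the combinatorial heavy lifting is already done by Theorem~\ref{thm:main}, so I do not expect a genuine obstacle. The one point needing care is the index bookkeeping in the lower bound: relaxing $\Prob{X \leq \load-1}$ up to $\Prob{X \leq \load}$ (legitimate since the events are nested) is what lets the Chernoff exponent evaluated at the threshold $\load$ collapse to the clean closed form $\myrho - \ln(e\myrho)$, rather than the messier expression one gets at $\load-1$. It is also worth checking that $\myrho - \ln(e\myrho) = \myrho - 1 - \ln\myrho > 0$ for all $\myrho > 1$, which confirms the stated lower bound is meaningful in its regime.
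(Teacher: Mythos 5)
Your argument is correct for integer $\load$ and is essentially the paper's proof. The upper bound is identical: Theorem \ref{thm:main} combined with $\binom{\balls}{\load}\leq(e\balls/\load)^{\load}$. For the lower bound the paper invokes the Chernoff--Hoeffding bound in its relative-entropy form and then relaxes the divergence term via $\ln(1+x)\leq x$, whereas you rederive the same tail bound from the moment generating function using $1+x\leq e^{x}$ and optimize at $t=\ln(\mu/\load)$; these are two presentations of the same estimate and land on the identical exponent $\load(\myrho-1-\ln\myrho)=\load(\myrho-\ln(e\myrho))$. Your off-by-one bookkeeping (relaxing $\Prob{X\leq\load-1}$ to $\Prob{X\leq\load}$) matches what the paper does implicitly, and your sanity check that $\myrho-1-\ln\myrho>0$ for $\myrho>1$ is correct.

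The one genuine omission is the case of non-integer $\load$. The corollary is asserted for all real $\load\in[1,\infty)$, and this generality is load-bearing downstream (Corollary \ref{cor:tail-bounds} applies these bounds at $\load=(e/\delta)\load^{*}$, which need not be an integer), but Theorem \ref{thm:main} is stated only for integer $\load$. You therefore need the additional step $\Prob{\maxload\geq\load}=\Prob{\maxload\geq\lceil\load\rceil}$, followed by a verification that the bound evaluated at $\lceil\load\rceil$ is at least as strong as the one claimed at $\load$. For the upper bound this is quick: since $\myrho$ is nonincreasing in $\load$ (both $\loadnorm$ and $1/\load$ decrease), one has $e\rho_{\balls,\lceil\load\rceil}\leq e\myrho<1$ and $\lceil\load\rceil\geq\load$, hence $(e\rho_{\balls,\lceil\load\rceil})^{\lceil\load\rceil}\leq(e\myrho)^{\load}$. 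A corresponding monotonicity check is what the paper's proof supplies for the lower bound as well; without some such rounding argument your proof establishes the corollary only for integer $\load$.
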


\begin{cor} \label{cor:rho-phase2}
  Throw $\balls \geq 1$ balls into $\bins \geq 2$ bins independently according to $\dist \in \simplex$.
  For any $\load \in [1,\infty)$ and $\delta \in (0,1)$:
  \begin{align*}
    \Prob{\maxload \geq \load} &\leq \delta      & \text{if $\myrho \leq \left(\frac{\delta^{1/\load}}{e}\right)$} \\
    \Prob{\maxload \geq \load} &\geq 1 - \delta  & \text{if $\myrho \geq \max\left\{e^2 , 2\ln(\tfrac{1}{\delta^{1/\load}}) \right\}$.} \\
  \end{align*}

\end{cor}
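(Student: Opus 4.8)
The plan is to derive both inequalities directly from Corollary \ref{cor:rho-tail} by elementary manipulation; no new probabilistic input is needed, so all the work lies in checking that the stated thresholds on $\myrho$ push the exponential bounds of Corollary \ref{cor:rho-tail} past the level $\delta$.

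For the upper bound I would start from the first inequality of Corollary \ref{cor:rho-tail}, which is valid when $\myrho < 1/e$. I'd first note that this hypothesis is automatic under $\myrho \leq \delta^{1/\load}/e$, since $\delta \in (0,1)$ forces $\delta^{1/\load} < 1$. The goal $\exp(-\load \ln(\tfrac{1}{e\myrho})) \leq \delta$ becomes, after taking logarithms and dividing by $\load$, the condition $\ln(\tfrac{1}{e\myrho}) \geq \tfrac{1}{\load}\ln(\tfrac{1}{\delta}) = \ln(\tfrac{1}{\delta^{1/\load}})$, which exponentiates to exactly $e\myrho \leq \delta^{1/\load}$. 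So the upper bound is immediate from the stated hypothesis.

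For the lower bound I would start from the second inequality of Corollary \ref{cor:rho-tail}, valid for $\myrho > 1$; here $\myrho \geq e^2$ guarantees this. Using $\ln(e\myrho) = 1 + \ln\myrho$, the goal $1 - \exp(-\load(\myrho - \ln(e\myrho))) \geq 1 - \delta$ reduces, after the same log-and-divide step, to
\[ \myrho - \ln\myrho - 1 ~\geq~ \tfrac{1}{\load}\ln(\tfrac{1}{\delta}) , \]
and I will call the right-hand side $L$. The two entries of the maximum then serve distinct purposes. The condition $\myrho \geq 2L$ gives $\myrho/2 \geq L$, while the condition $\myrho \geq e^2$ gives $\myrho/2 \geq \ln\myrho + 1$ (the function $\myrho/2 - \ln\myrho - 1$ has derivative $\tfrac12 - \tfrac1\myrho$, hence is increasing for $\myrho > 2$, and is positive at $\myrho = e^2$), i.e. $\myrho - \ln\myrho - 1 \geq \myrho/2$. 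Chaining these yields $\myrho - \ln\myrho - 1 \geq \myrho/2 \geq L$, as needed.

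The only genuinely non-routine step is this splitting argument for the lower bound: one must see that the two entries of the max separate cleanly through the intermediate quantity $\myrho/2$, with the $e^2$ threshold absorbing the logarithmic and constant correction terms and the $2L$ threshold absorbing the target $L$. Everything else is exponentiation and rearrangement, so I expect the writeup to be short.
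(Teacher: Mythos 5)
Your proposal is correct and follows the same route as the paper: plug the hypothesis into the first bound of Corollary~\ref{cor:rho-tail} for the upper tail, and for the lower tail use $\myrho \geq e^2$ to get $\myrho - \ln(e\myrho) \geq \myrho/2$ and $\myrho \geq 2\ln(1/\delta^{1/\load})$ to get $\exp(-\load\myrho/2) \leq \delta$. Your writeup just makes explicit the monotonicity check at $\myrho = e^2$ that the paper leaves implicit.
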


Finally, we derive concentration bounds for $\maxload$ and $\wait$.
\begin{cor} \label{cor:tail-bounds}
  Throw $\balls \geq 1$ balls into $\bins \geq 2$ bins independently according to $\dist \in \simplex$.
  Define $\load^* \in [1,\infty)$ to be the solution of $\frac{\load^*}{\|\dist\|_{\load^*}} = \balls$.
  For any $\delta \in (0,1)$:
  \begin{align}
    \Prob{ \maxload \geq \left(\frac{e}{\delta}\right) \load^* }
    &\leq \delta  & \text{and}  \\
    \Prob{ \maxload \geq \frac{1}{\max\left\{e^2 , 2\ln(\tfrac{1}{\delta}) \right\}} \load^* }
    &\geq 1 - \delta .
  \end{align}
\end{cor}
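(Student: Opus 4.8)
The plan is to derive both inequalities from Corollary \ref{cor:rho-phase2} by instantiating $\load$ as a scalar multiple of $\load^*$ and checking the hypotheses. The engine of the argument is a scaling identity for the quantity $\myrho$ along the ray $\load = s\load^*$. Writing $b_i \coloneqq \balls \dist_i / \load^*$, the defining relation $\load^*/\|\dist\|_{\load^*} = \balls$ is exactly $\sum_i b_i^{\load^*} = 1$, which in particular forces $b_i \leq 1$ for every $i$. Substituting $\dist_i = b_i \load^*/\balls$ into the definition of $\myrho$ then gives
\[
  \rho_{\balls,s\load^*} = \frac{1}{s}\left(\sum_i b_i^{s\load^*}\right)^{1/(s\load^*)} .
\]
Since each $b_i \leq 1$, the inner sum is at most $1$ when $s \geq 1$ and at least $1$ when $s \leq 1$, so $\rho_{\balls,s\load^*} \leq 1/s$ for $s \geq 1$ and $\rho_{\balls,s\load^*} \geq 1/s$ for $s \leq 1$. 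This monotone scaling is the one fact I would isolate as a lemma before touching the corollary, since it is what converts the implicitly defined $\load^*$ into explicit constant factors.

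For the upper bound I would set $\load = (e/\delta)\load^*$, i.e.\ $s = e/\delta > 1$. The scaling bound gives $\myrho \leq \delta/e$, and since $\load \geq (e/\delta)\cdot 1 > 1$ we have $\delta \leq \delta^{1/\load}$, hence $\myrho \leq \delta^{1/\load}/e$. This is precisely the hypothesis of the upper bound in Corollary \ref{cor:rho-phase2}, which then yields $\Prob{\maxload \geq \load} \leq \delta$, as required.

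For the lower bound I would set $\load = \load^*/M$ with $M \coloneqq \max\{e^2 , 2\ln(1/\delta)\}$, i.e.\ $s = 1/M \leq 1$. Here the main obstacle appears: Corollary \ref{cor:rho-phase2} requires $\load \geq 1$, which can fail when $\load^*$ is small, so I would split into two cases. If $\load^*/M \leq 1$, the claim is trivial, since throwing $\balls \geq 1$ balls forces $\maxload \geq 1 \geq \load^*/M$ and the probability is $1 \geq 1-\delta$. If instead $\load^*/M > 1$, then $\load^* > M \geq 2\ln(1/\delta)$, and I apply the lower bound of Corollary \ref{cor:rho-phase2}, whose hypothesis has two parts. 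The first, $\myrho \geq e^2$, follows from $\myrho \geq 1/s = M \geq e^2$. The second, $\myrho \geq 2\ln(1/\delta^{1/\load}) = \tfrac{2}{\load}\ln(1/\delta)$, is equivalent to $\load\,\myrho \geq 2\ln(1/\delta)$; here $\load\,\myrho \geq (\load^*/M)\cdot M = \load^*$, and in this case $\load^* > 2\ln(1/\delta)$, so it holds. Corollary \ref{cor:rho-phase2} then gives $\Prob{\maxload \geq \load} \geq 1-\delta$.

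The step I expect to be most delicate is not any single computation but the interaction between the $\delta^{1/\load}$ term in Corollary \ref{cor:rho-phase2} and the choice of $\load$: the scaling identity and the bound $b_i \leq 1$ are what make the $\load$-dependent thresholds collapse to the clean constants $e/\delta$ and $1/M$. The easy-to-overlook but necessary ingredient is the case split $\load^*/M \leq 1$ in the lower bound, which is unavoidable because the corollary's hypotheses are only meaningful for $\load \geq 1$.
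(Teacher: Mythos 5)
Your proof is correct and follows essentially the same route as the paper: instantiate Corollary \ref{cor:rho-phase2} at $\load = s\load^*$ and use the fact that $\|\dist\|_{\load}$ is nonincreasing in $\load$ (your $b_i \leq 1$ scaling lemma is just this monotonicity rewritten) to get $\myrho \leq 1/s$ for $s\geq 1$ and $\myrho \geq 1/s$ for $s\leq 1$. The one genuine addition is your explicit case split when $\load^*/\max\{e^2, 2\ln(1/\delta)\} \leq 1$, which the paper's proof silently skips even though Corollary \ref{cor:rho-phase2} is only stated for $\load \geq 1$; your handling of that edge case is a small but real improvement in rigor.
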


\begin{cor} \label{cor:tail-bounds-waiting}
  Throw balls into $\bins \geq 2$ bins independently according to $\dist \in \simplex$ and let $\load \in \nats, \load \geq 1$.
  Define $\balls^* = \frac{\load}{\loadnorm}$.
  Then for any $\delta \in (0,1)$:
  \begin{align}
    \Prob{\wait \leq \left(\frac{\delta}{e}\right) \balls^*}                     &\leq \delta  & \text{and} \\
    \Prob{\wait \leq \max\left\{e^2 , 2\ln(\tfrac{1}{\delta}) \right\} \balls^*} &\geq 1 - \delta  .
  \end{align}

\end{cor}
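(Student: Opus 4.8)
The plan is to reduce both inequalities to Corollary~\ref{cor:rho-phase2} using the duality $\Prob{\wait \leq \balls} = \Prob{\maxload \geq \load}$ stated in the introduction, together with the key fact that $\balls^* = \load/\loadnorm$ is exactly the ball count at which $\myrho = \balls\loadnorm/\load$ equals $1$. Since $\myrho$ scales linearly in the number of balls, throwing a factor $c$ times $\balls^*$ balls makes $\myrho = c$ exactly. So for each of the two inequalities I would set the number of balls to the prescribed multiple of $\balls^*$, read off the resulting value of $\myrho$, and check that it satisfies the hypothesis of the corresponding half of Corollary~\ref{cor:rho-phase2}.

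For the first inequality, take $\balls = (\delta/e)\,\balls^*$, so that $\myrho = \delta/e$. The upper half of Corollary~\ref{cor:rho-phase2} requires $\myrho \leq \delta^{1/\load}/e$, and this holds because $\delta \leq \delta^{1/\load}$ for every $\delta \in (0,1)$ and $\load \geq 1$. Hence $\Prob{\maxload \geq \load} \leq \delta$, which is $\Prob{\wait \leq (\delta/e)\balls^*} \leq \delta$. For the second inequality, take $\balls = \max\{e^2, 2\ln(\tfrac{1}{\delta})\}\,\balls^*$, so that $\myrho = \max\{e^2, 2\ln(\tfrac{1}{\delta})\}$. The lower half of Corollary~\ref{cor:rho-phase2} requires $\myrho \geq \max\{e^2, 2\ln(\tfrac{1}{\delta^{1/\load}})\}$; since $\ln(\tfrac{1}{\delta^{1/\load}}) = \tfrac{1}{\load}\ln(\tfrac{1}{\delta}) \leq \ln(\tfrac{1}{\delta})$ for $\load \geq 1$, the chosen $\myrho$ dominates the threshold, giving $\Prob{\maxload \geq \load} \geq 1 - \delta$.

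The one genuine subtlety --- and the step I expect to be the main obstacle --- is integrality: the number of balls must be an integer, whereas the multiples of $\balls^*$ appearing above are real. Since $\wait$ is integer-valued, $\Prob{\wait \leq x} = \Prob{\wait \leq \lfloor x \rfloor}$, and $\Prob{\maxload \geq \load}$ is nondecreasing in $\balls$. For the first (upper) inequality this is harmless: flooring the ball count only decreases $\myrho$ and hence only decreases $\Prob{\maxload \geq \load}$, so the bound $\leq \delta$ is preserved. For the second (lower) inequality flooring works against us, reducing $\myrho$ by at most $\loadnorm/\load = 1/\balls^* \leq 1/\load$ (using $\loadnorm \leq \|\dist\|_1 = 1$). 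I would dispatch $\load = 1$ by hand --- there $\wait = 1$ almost surely and both claims are immediate --- and for $\load \geq 2$ absorb the $O(1/\load)$ loss in $\myrho$ into the slack built into the sufficient condition of Corollary~\ref{cor:rho-phase2}, re-deriving directly from the sharper bound $\Prob{\maxload \geq \load} \geq 1 - \exp(-\load(\myrho - \ln(e\myrho)))$ of Corollary~\ref{cor:rho-tail} should the clean constants prove too tight.
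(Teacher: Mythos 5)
Your proposal is correct and takes essentially the same route as the paper, whose entire proof of this corollary is the one-liner that it ``follows almost immediately from Corollary~\ref{cor:rho-phase2} and $\delta \leq \delta^{1/\load}$,'' i.e.\ exactly your reduction via $\Prob{\wait \leq \balls} = \Prob{\maxload \geq \load}$ and the observation that $\myrho$ scales linearly in the ball count. The integrality subtlety you flag is genuine but is not addressed in the paper at all; your handling of it (the trivial case $\load=1$, plus absorbing the at-most-$1/\balls^*$ loss in $\myrho$ into the slack of Corollary~\ref{cor:rho-tail}) is more careful than the published argument.
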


Finally, we observe that Theorem \ref{thm:main} directly implies simple bounds on $\E \wait$.
As discussed in Section \ref{sec:cor-proofs}, the upper bound is the best possible and the lower bound almost so (for completeness, we will mention a slightly better one).
\begin{cor} \label{cor:expected-wait}
  Let $\bins \geq 1$, $\dist \in \simplex$, and $\load \in \nats, \load \geq 1$.
  Then
    \[ \left(\frac{1}{e}\right) \left(\frac{\load}{\load + 1}\right) \frac{\load}{\loadnorm}
       \leq \E \wait
       \leq \frac{\load}{\loadnorm} . \]
\end{cor}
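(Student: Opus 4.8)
The plan is to start from the tail-sum formula for the nonnegative integer-valued random variable $\wait$,
\[ \E \wait = \sum_{\balls=0}^{\infty} \Prob{\wait > \balls} = \sum_{\balls=0}^{\infty}\left(1 - \Prob{\maxload \geq \load}\right), \]
using the identity $\Prob{\wait \le \balls} = \Prob{\maxload \ge \load}$ noted in the excerpt. Each of the two inequalities then reduces to estimating the summand $1 - \Prob{\maxload \ge \load}$ from one side, and for this I will plug in the corresponding side of Theorem \ref{thm:main}.

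For the upper bound I would apply the lower bound of Theorem \ref{thm:main}, namely $\Prob{\maxload \ge \load} \ge \Prob{\bino(\balls, \loadnorm) \ge \load}$, which gives $1 - \Prob{\maxload \ge \load} \le \Prob{\bino(\balls, \loadnorm) < \load}$. The key observation is that $\sum_{\balls \ge 0}\Prob{\bino(\balls, \loadnorm) < \load}$ is itself a tail-sum: if $W$ denotes the number of independent $\mathrm{Bernoulli}(\loadnorm)$ trials needed to collect $\load$ successes, then $\Prob{W > \balls} = \Prob{\bino(\balls,\loadnorm) < \load}$, so the sum equals $\E W = \load/\loadnorm$, the mean of a negative binomial. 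This yields $\E \wait \le \load/\loadnorm$ directly.

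For the lower bound I would apply the upper bound of Theorem \ref{thm:main} together with the standard estimate $\binom{\balls}{\load} \le (e\balls/\load)^{\load}$, giving $\Prob{\maxload \ge \load} \le (e \balls \loadnorm/\load)^{\load}$. Writing $M \coloneqq \load/(e\loadnorm)$ and $\phi(x) \coloneqq \max\{0, 1 - (x/M)^{\load}\}$, this shows $1 - \Prob{\maxload \ge \load} \ge \phi(\balls)$ for every $\balls$. Since $\phi$ is nonincreasing and nonnegative on $[0,\infty)$, each term dominates an integral, $\phi(\balls) \ge \int_{\balls}^{\balls+1}\phi(x)\,dx$, and hence $\E \wait \ge \sum_{\balls \ge 0}\phi(\balls) \ge \int_0^{\infty}\phi(x)\,dx = \int_0^M \left(1 - (x/M)^{\load}\right)dx = M\cdot\tfrac{\load}{\load+1}$. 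Substituting $M = \load/(e\loadnorm)$ gives exactly the claimed lower bound $\tfrac{1}{e}\cdot\tfrac{\load}{\load+1}\cdot\tfrac{\load}{\loadnorm}$.

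I expect no serious obstacle; the only care needed is in the lower-bound integral comparison, where one must invoke monotonicity of the truncated function $\phi$ rather than of $1 - (x/M)^{\load}$, which changes sign at $x = M$ and would otherwise break the term-by-term domination. The sharpness remarks promised in the text — that the upper bound is attained and the lower bound admits a slight improvement — would follow by examining the single-heavy-bin distribution $\dist = (1,0,\dots,0)$, where $\wait = \load$ deterministically and $\loadnorm = 1$, and by substituting a tighter binomial-coefficient estimate in place of $\binom{\balls}{\load} \le (e\balls/\load)^{\load}$, respectively.
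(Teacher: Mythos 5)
Your proposal is correct and follows essentially the same route as the paper: the upper bound via comparison with the negative-binomial waiting time $\load/\loadnorm$ (the paper phrases this as stochastic dominance of $\wait$ by the coin-flip waiting time, which amounts to the same tail-sum computation), and the lower bound via the tail sum, the estimate $\binom{\balls}{\load}\loadnorm^{\load} \le (e\balls\loadnorm/\load)^{\load}$, and the comparison of $\sum_{\balls\ge 0}\max\{0,1-(\balls/M)^{\load}\}$ with $\int_0^M(1-(x/M)^{\load})\,dx$. Your explicit remark about needing monotonicity of the truncated function $\phi$ rather than of $1-(x/M)^{\load}$ is a correct and slightly more careful justification of the sum-to-integral step than the paper gives.
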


\paragraph{Remarks.}
Depending on the application, the concentration bound for $\maxload$ (Corollary \ref{cor:tail-bounds}) may or may not be satisfactory.
For small, e.g. single-digit maximum loads $\load^*$, a constant-factor interval may be considered too wide.
And even for large $\load^*$, true confidence interval around $\maxload$ could be as small as width zero in some scenarios, making a constant-factor range comparatively loose.\footnote{%
  We used the naive bound $\|\dist\|_{\load} \geq \|\dist\|_{\load+1}$.
  If this is far from tight, then $\myrho \gg \rho_{\balls,\load+1}$, implying that a confidence interval need not contain both $\load$ and $\load+1$.
  On the other hand, if it is close to tight, then $\dist$ is in a sense far from uniform, and the maximum bin may exhibit some concentration for this reason.
  Perhaps future work can leverage this tradeoff to give tighter bounds.}

We expect the confidence interval for $\wait$, Corollary \ref{cor:tail-bounds-waiting}, to be sufficient and useful for many scenarios.
It gives a closed-form $\balls^*$ for approximately how many balls must be thrown until some bin receives $\load$ balls; and shows that within a small constant factor around $\balls^*$, the probability transitions from almost $0$ to almost $1$.

More generally, Corollary \ref{cor:rho-phase2} suggests a kind of phase transition for $\Prob{\maxload \geq \load}$ around $\myrho = 1$.
In Section \ref{sec:sims}, we use simulation results to illustrate this transition along with the tightness of Theorem \ref{thm:main}.

As $\myrho \to \infty$, we observe $\Pr[\maxload < \load] \to 0$ exponentially fast in $\myrho$.
However, as $\myrho \to 0$, we only obtain $\Pr[\maxload \geq \load] \to 0$ polynomially in $\tfrac{1}{\myrho}$.
Given that the upper bound, $\Pr[\maxload \geq \load] \leq {\balls \choose \load} \loadnorm$, arises from a simple Markov inequality (we will see), one might expect it to be loose.
However, in this regime the upper and lower bounds match up to a factor only depending on $\load$.
Specifically, consider the regime $\myrho \leq \tfrac{1}{2}$, i.e. $\balls \loadnorm \leq \tfrac{\load}{2}$.
Suppose for example that $\loadnorm \leq \tfrac{1}{2}$, then we obtain
\begin{align*}
  {\balls \choose \load} \loadnorm^{\load}
  &\geq \Pr[\maxload \geq \load]  \\
  &\geq \Pr[\bino(\balls,\loadnorm)=\load]  \\
  &\geq (1 - \loadnorm)^{\balls} {\balls \choose \load} \loadnorm^{\load}  \\
  &\geq \exp\left(-2 \balls \loadnorm\right) {\balls \choose \load} \loadnorm^{\load}  \\
  &\geq e^{-\load}  {\balls \choose \load} \loadnorm^{\load} .
\end{align*}
If $\balls \loadnorm \to 0$, then the upper and lower bounds match to a factor $1 \pm o(1)$; this is illustrated for $\load=2$ in Figure \ref{fig:birthday}.

\paragraph{Applications with the uniform distribution.}
For any $\dist$ and $\balls$, a rough estimate of the max loaded bin can be obtained simply by solving $\myrho \approx 1$ for $\load$ (though this calculation is not always simple).
By Corollary \ref{cor:tail-bounds}, with high probability this estimate will be within a constant factor of $\maxload$.
Similarly, for any $\dist$ and $\load$, an estimate of the number of throws until a $\load$-loaded bin can be obtained by the calculation $\myrho \approx 1$ (which is simple).
In particular when $\dist$ is uniform, then $\loadnorm = \frac{\bins^{1/\load}}{\bins}$ and
  \[ \myrho = \frac{\balls \bins^{1/\load}}{\bins \load} .  \]

We recover some known results, possibly with more application-friendly tail bounds than have been previously stated.
\begin{itemize}
  \item For the classic birthday paradox, set $\load = 2$, obtaining $\myrho = \frac{\balls}{2\sqrt{\bins}}$ and $\balls^* = 2 \sqrt{\bins}$.
        Apply Corollary \ref{cor:tail-bounds-waiting} to obtain that $\tfrac{2 \delta}{e} \sqrt{\bins} \leq \wait \leq \max\{2e^2, 4\ln(\tfrac{1}{\delta})\} \sqrt{\bins}$ except with probability $2\delta$.
        The polynomial dependence on $\delta$ in the lower bound appears necessary (cf. above remarks and Figure \ref{fig:birthday}).
  \item In the classic case of $\balls = \bins$, the problem $\myrho = 1$ reduces to solving $\load^{\load} = \bins$, or $\load \approx \frac{\ln(\bins)}{\ln( \ln(\bins))}$.
  \item For another example, if $\balls = \bins \log(\bins)$, then for $\load = \log(\bins)$ we have $\myrho = 1$ exactly.
  \item Similarly, consider any $\balls \geq \bins \log(\bins)$.
        Then $\load^* \leq e \tfrac{\balls}{\bins}$, by observing that for $\load = e \cdot \tfrac{\balls}{\bins}$ we have $\myrho \leq 1$.
        As $\load^* \leq e \tfrac{\balls}{\bins}$, the maximum load is concentrated within a small constant factor of the average load.
\end{itemize}

\subsection{Related work} \label{sec:related-work}
The waiting time $T_\load$ for the first $\load$-fold repeat is subject of the classical $\load$-birthday problem and has been intensively investigated. The literature is vast, we just give some pointers. 
Most of the research focuses on asymptotic results for the case of uniformly distributed birthdays (bins).  \cite{von1939aufteilungs} started by proving Poisson limits for the number of $\load$-bins, and \cite{klamkin1967extensions} gave an elegant integral representation for $\mathbb{E}(T_\load)$ and proved asymptotic Weibull distribution
of $T_\load$. \citet{raab1998balls} gives precise estimates for the max-loaded bin in a variety of regimes.
However, these results focus on the uniform distribution only and are asymptotic. For the non-uniform case the possible limit distributions for $T_\load$ were given by \cite{camarri2000limit} ($\load=2$) resp. \cite{camarri1998asymptotics} ($\load>2$).

For obtaining bounds multinomial probabilities must be estimated, and several common approaches are known.
For the upper bound, a common basic approach is to separately bound the chance that each bin is $\load$-loaded and apply a union bound (Bonferroni correction).
For the lower bound, one can utilize \emph{negative dependence}~\citep{dubhashi1996balls}, based on the inequality of Mallows.
Another general technique is to approximate the process by independent Poisson variables in each bin.
This work differs from such results in that it emphasizes the role of $\myrho$ in controlling the probabilities, and that it gives nonasymptotic bounds.

For non-uniform $\dist$, non-asymptotic bounds for the case $\bins=2$ were studied by \cite{wiener2005bounds}, and the lower bound of  Theorem \ref{thm:main} for the case $\load=2$ was already proved there.
\cite{holst1995general} studies the general  $k$-birthday 
problem and shows that $\Pr[\wait \leq \balls]$ is Schur-convex in $\dist$. Thus $\Pr[\wait \leq \balls]$ is for a given 
$\loadnorm=c$ minimized resp. maximized for the distributions which are minimal resp. maximal (in the order of majorization) under this side condition.
A possible route to bounds then consists in determining these minimal/maximal distributions and to proceed from there. This is in essence Wiener's approach. The same idea could be followed for $\bins>2$, but the arguments become lengthy and tedious. In contrast we derive bounds here by elementary direct arguments.

\section{Upper bound} \label{sec:upper}
Recall that $\maxload$ is the maximum load when $\balls \geq 0$ balls are thrown independently into $\bins \geq 1$ bins with distribution $\dist \in \simplex$.
Let $\load \geq 1$.
\begin{prop} \label{prop:collision-upper}
  $\Pr[\maxload \geq \load] \leq {\balls \choose \load} \loadnorm^{\load}.$
\end{prop}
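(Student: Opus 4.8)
The plan is to establish this upper bound via a first-moment (union-bound) argument, which is exactly the ``simple Markov inequality'' anticipated in the introduction's remarks. The key idea is to \emph{count}, rather than merely detect, the witnesses to a $\load$-loaded bin, and then bound the probability of at least one witness by the expected number of witnesses.

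First I would set up the witnessing events. For each bin $i \in \{1,\dots,\bins\}$ and each size-$\load$ subset $S$ of the $\balls$ balls, let $A_{i,S}$ denote the event that every ball in $S$ lands in bin $i$. Since the balls are thrown independently and each lands in bin $i$ with probability $\dist_i$, we have $\Prob{A_{i,S}} = \dist_i^{\load}$. The crucial observation is that $\maxload \geq \load$ holds if and only if at least one of the events $A_{i,S}$ occurs: a bin holding at least $\load$ balls contains some size-$\load$ subset landing there, and conversely any such subset forces its bin to hold at least $\load$ balls.

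Next I would introduce the counting variable $X = \sum_{i=1}^{\bins} \sum_{S : |S| = \load} \Ind{A_{i,S}}$, the number of (bin, $\load$-subset) witnesses. By the observation above, $\{\maxload \geq \load\} = \{X \geq 1\}$, so Markov's inequality gives $\Prob{\maxload \geq \load} = \Prob{X \geq 1} \leq \E X$. By linearity of expectation and the probability computed above,
\[ \E X = \sum_{i=1}^{\bins} \binom{\balls}{\load} \dist_i^{\load} = \binom{\balls}{\load} \sum_{i=1}^{\bins} \dist_i^{\load} = \binom{\balls}{\load} \loadnorm^{\load}, \]
where the last equality is simply the definition $\loadnorm^{\load} = \sum_i \dist_i^{\load}$ of the $\load$-norm. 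This is exactly the claimed bound.

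There is no real obstacle here; the argument is a clean union bound, and the only thing to keep in mind is that it is deliberately lossy. A bin holding $\load' > \load$ balls is counted $\binom{\load'}{\load}$ times by $X$, so $X$ overcounts the true occurrence of the event. Since overcounting can only inflate $\E X$, this loss preserves the direction of the inequality, and it is precisely the source of the looseness discussed in the introduction's remarks for small $\myrho$. No independence beyond that of distinct balls is used, and the bound holds verbatim for every $\dist \in \simplex$.
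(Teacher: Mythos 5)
Your proof is correct and is essentially identical to the paper's: both count $\load$-way collision witnesses, observe that $\{\maxload \geq \load\}$ is the event that at least one witness occurs, and apply Markov's inequality to the expected count $\binom{\balls}{\load}\loadnorm^{\load}$. The only cosmetic difference is that you index witnesses by (bin, subset) pairs while the paper indexes by subsets alone and sums over bins when computing each indicator's expectation; the resulting sums are the same.
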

\begin{proof}
  Let $Z_{\load} \subseteq \{1,\dots,\balls\}$ be the set of all subsets of $\load$ distinct balls.
  For $S \in Z_{\load}$, let $\Ind{S}$ be the indicator that all balls in $S$ fall into the same bin; we call the event $\{\Ind{S}=1\}$ a \emph{$\load$-way collision}.
  Define $\col = \sum_{S \in Z_{\load}} \Ind{S}$, the number of $\load$-way collisions.

  For each $S \in Z_{\load}$, we have $\E[\Ind{S}] = \sum_{i=1}^{\bins} \dist_i^{\load} = \loadnorm^{\load}$.
  This follows because the event $\{\Ind{S}=1\}$ is the disjoint union of the events that all balls fall into bin $1$ or \dots or $\bins$.

  Thus, $\E[\col] = |Z_{\load}| \loadnorm^{\load} = {\balls \choose \load} \loadnorm^{\load}$.
  Now,
  \begin{align*}
    \Pr[\maxload \geq \load]
    &=    \Pr[\col \geq 1]  \\
    &\leq \E\left[ \col \right]  & \text{Markov's inequality}  \\
    &=    {\balls \choose \load} \loadnorm^{\load} .
  \end{align*}
\end{proof}

\section{Lower bound} \label{sec:lower}
Recall that $\maxload$ is the maximum load when $\balls \geq 0$ balls are thrown independently into $\bins \geq 1$ bins with distribution $\dist \in \simplex$.
Let $\load \geq 1$.
Write $[\bins] = \{1,\dots,\bins\}$.
In this section, we prove:
\begin{prop} \label{prop:lower-bound}
  $\Pr[\maxload \geq \load] \geq \Pr[\bino(\balls, \loadnorm) \geq \load].$
\end{prop}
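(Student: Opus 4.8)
The plan is to reformulate the inequality as a statement about the waiting time and then prove it by induction on the number of balls $\balls$, exploiting the fact that the bound is exactly tight at $\balls=\load$. Since $\Prob{\maxload \geq \load}=\Prob{\wait \leq \balls}$, the claim is equivalent to the stochastic domination $\Prob{\wait \leq \balls} \geq \Prob{\bino(\balls,\loadnorm)\geq \load}$ for every $\balls$: the first time the process creates a $\load$-loaded bin is stochastically no later than the $\load$-th success in a stream of independent $\loadnorm$-coins. First I would settle the base case. For $\balls<\load$ both sides vanish, and for $\balls=\load$ both equal $\loadnorm^{\load}=\sum_i \dist_i^{\load}$, because with exactly $\load$ balls a $\load$-loaded bin occurs iff all balls coincide. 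This exact agreement at $\balls=\load$ is what makes an increment-by-increment induction the natural route.

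For the inductive step I would compare how each side grows when the $\balls$-th ball is thrown. Writing $b_\balls=\Prob{\bino(\balls,\loadnorm)\geq\load}$, the binomial tail increases by exactly $b_\balls-b_{\balls-1}=\loadnorm\cdot\Prob{\bino(\balls-1,\loadnorm)=\load-1}$, since the count crosses $\load$ precisely when it stood at $\load-1$ and the new coin succeeds. Writing $a_\balls=\Prob{\maxload \geq \load}$, the ball-side probability increases only on histories where the first $\balls-1$ balls leave every bin with at most $\load-1$ balls and the $\balls$-th ball lands in a bin currently holding exactly $\load-1$; hence $a_\balls-a_{\balls-1}=\E[\,\mathbf 1(\text{no bin is full})\cdot L\,]$, where $L$ is the total $\dist$-mass of the bins holding exactly $\load-1$ balls after $\balls-1$ throws. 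The goal is to show this ``completion probability'' dominates the binomial increment.

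The engine relating $\loadnorm$ to the process is the Hölder/duality identity $\loadnorm=\sum_i \dist_i\,(\dist_i/\loadnorm)^{\load-1}$, valid because $\loadnorm \geq \max_i \dist_i$ forces each factor $(\dist_i/\loadnorm)^{\load-1}$ into $[0,1]$. This exhibits $\loadnorm$ as the exact probability that a single ball is ``accepted,'' where a ball in bin $i$ is accepted with probability $(\dist_i/\loadnorm)^{\load-1}$; the number of accepted balls among any $\balls$ throws is then exactly $\bino(\balls,\loadnorm)$, and accepted balls are biased toward heavy bins (landing in bin $i$ with probability proportional to $\dist_i^{\load}$). I would use this representation to re-express the binomial increment in terms of the process itself and match it against the mass $L$ on the almost-full bins.

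The hard part will be exactly this comparison. A pointwise coupling is hopeless: accepted balls need not share a bin, so ``$\load$ balls accepted'' does not imply a full bin, and the naive sub-event ``some bin receives $\load$ accepted balls'' undershoots the target whenever $\dist$ is spread out. The inequality is therefore genuinely distributional, and the crux is to show that, conditioned on no bin being full after $\balls-1$ throws, the $\dist$-mass sitting on the $(\load-1)$-loaded bins is at least the binomial ``exactly $\load-1$'' weight scaled by $\loadnorm$ — i.e. that collisions among \emph{all} balls accumulate at least as fast as isolated coin successes. I expect to establish this by conditioning on the load profile and combining the acceptance representation with the convexity that makes heavy bins carry disproportionate mass; verifying that the domination persists for every $\balls$, rather than only in aggregate, is where the real work lies.
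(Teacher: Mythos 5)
Your reduction to the waiting time and your base case at $\balls=\load$ are fine, and the acceptance identity $\sum_i \dist_i(\dist_i/\loadnorm)^{\load-1}=\loadnorm$ is correct, but the inductive step you propose is aimed at a false statement. You are asking for the increment $a_\balls-a_{\balls-1}=\Pr[\wait=\balls]$ to dominate the increment $b_\balls-b_{\balls-1}$ for \emph{every} $\balls$, i.e.\ for pointwise domination of the probability mass function of $\wait$ over that of the negative binomial waiting time for $\load$ successes of a $\loadnorm$-coin. Both mass functions sum to $1$, so such domination would force the two distributions to coincide, which they do not: for $\bins=2$ uniform and $\load=2$, $\wait$ is supported on $\{2,3\}$ while the coin-flip waiting time has unbounded support, so concretely $a_4-a_3=0$ while $b_4-b_3>0$. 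First-order stochastic dominance of the CDFs does not decompose into dominance of increments; the slack accumulated at earlier steps ($a_{\balls-1}>b_{\balls-1}$) is exactly what must absorb the later shortfall, but your plan never uses the inductive hypothesis quantitatively. The difficulty you defer to the end ("where the real work lies") is therefore not merely hard — no amount of convexity or conditioning on load profiles will establish it.

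The paper avoids this entirely by inducting on bins rather than balls. A two-bin lemma (proved via the negative-binomial rewrite of the binomial CDF, Fact~1) shows that replacing the pair $(\distq_j,\distq_{j+1})$ by a single bin of mass $\|(\distq_j,\distq_{j+1})\|_{\load}$, and thereafter ignoring the leftover bin, can only increase $\Pr[\maxload<\load]$; conditioning on the number of balls landing in the pair lifts this to $\bins$ bins, and iterating collapses $\dist$ to a single bin of mass $\loadnorm$ whose load is exactly $\bino(\balls,\loadnorm)$. If you want to salvage an induction on $\balls$, you would need a step of the form $a_\balls\geq f(a_{\balls-1})$ with $b_\balls=f(b_{\balls-1})$ for a common monotone $f$, or some other mechanism that genuinely consumes the hypothesis $a_{\balls-1}\geq b_{\balls-1}$; a bare increment comparison cannot close the argument.
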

The proof will modify $\dist$ to some $\dist'$ and consider the maximum load of only bins $2,\dots,\bins$.
By repeating the argument, we eventually compare to the maximum load under some $\distq$ of only the final bin, and we show that $\distq_{\bins} = \loadnorm$.

To execute this argument, we utilize a stand-alone proof of Proposition \ref{prop:lower-bound} for the $\bins=2$ case, starting with a useful known fact (e.g.~\cite{knuth2019art}, Vol. 4, Fascicle 5, Mathematical Preliminaries Redux, Problem 23):
\begin{fact} \label{fact:binom-rewrite}
  Let $0 \leq \alpha \leq 1$ and $0 \leq t \leq \balls-1$.
  Then $\Pr[\bino(\balls,\alpha) \leq t] = (1-\alpha)^{\balls-t} \sum_{j=0}^t {\balls-t-1 + j \choose j} \alpha^j$.
\end{fact}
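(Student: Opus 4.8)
The plan is to avoid algebraic manipulation of the binomial sum and instead give both sides a direct interpretation in terms of a single sequence of independent Bernoulli trials, each a \emph{success} with probability $\alpha$ and a \emph{failure} with probability $1-\alpha$. The left-hand side $\Pr[\bino(\balls,\alpha) \leq t]$ is the probability that the first $\balls$ trials contain at most $t$ successes, equivalently at least $\balls-t$ failures. Since $t \leq \balls-1$ we have $\balls-t \geq 1$, so ``at least $\balls-t$ failures among the first $\balls$ trials'' is well-defined and is equivalent to the statement that the $(\balls-t)$-th failure occurs at or before trial $\balls$.

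Next I would identify each summand on the right with a negative-binomial (waiting-time) probability. Let $X$ be the index of the trial on which the $(\balls-t)$-th failure occurs; note $X \geq \balls-t$ deterministically. For $j \geq 0$, the event $\{X = (\balls-t)+j\}$ requires exactly $\balls-t-1$ failures and $j$ successes among the first $(\balls-t)+j-1$ trials, together with a failure on trial $(\balls-t)+j$ itself, so
\[
  \Pr\bigl[X = (\balls-t)+j\bigr] = \binom{\balls-t-1+j}{j}(1-\alpha)^{\balls-t}\alpha^{j},
\]
which is exactly the $j$-th term of the right-hand sum, including the shared factor $(1-\alpha)^{\balls-t}$.

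Summing over $j = 0,\dots,t$ then collapses the right-hand side: since $X \geq \balls-t$ always and $(\balls-t)+t = \balls$, we get $\sum_{j=0}^{t}\Pr[X=(\balls-t)+j] = \Pr[\balls-t \leq X \leq \balls] = \Pr[X \leq \balls]$. Combining this with the first paragraph, $\Pr[X \leq \balls]$ is the probability that the $(\balls-t)$-th failure lands within the first $\balls$ trials, which is precisely $\Pr[\bino(\balls,\alpha) \leq t]$, establishing the identity.

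The only point needing care is the bookkeeping in the last step and the role of the hypothesis $t \leq \balls-1$: it guarantees $\balls-t \geq 1$, so that $X$ refers to a genuine (at least first) failure and the negative-binomial counts are valid; the degenerate boundary cases $\alpha \in \{0,1\}$ can be checked by direct substitution. As a fallback I would keep induction on $t$ in reserve, with base case $t=0$ reducing both sides to $(1-\alpha)^{\balls}$ and the inductive step verifying, via Pascal's rule, that the increment of the right-hand side equals $\binom{\balls}{t}\alpha^{t}(1-\alpha)^{\balls-t}$; but this route is more tedious than the waiting-time argument, which I expect to be the cleanest path.
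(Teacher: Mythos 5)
Your proposal is correct and is essentially the paper's own proof: both identify the right-hand side with the negative-binomial distribution of the trial index of the $(\balls-t)$-th failure (the paper's ``tails'') and observe that this waiting time being at most $\balls$ is exactly the event $\{\bino(\balls,\alpha)\le t\}$. The only cosmetic difference is that the paper indexes the sum by the stopping position $\ell$ and then substitutes $j=\ell-(\balls-t)$, whereas you index by $j$ from the start.
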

\begin{proof}
  Suppose we flip coins independently with success probability $\alpha$ until we have $\balls - t$ tails, then halt.
  The probability that we halt at or before step $\balls$ is $\Pr[\bino(\balls,\alpha) \leq t]$.
  Now with a counting argument: Consider the steps where it is possible to halt, $\ell = \balls - t, \dots, \balls$.
  We stop at such an $\ell$ if and only if there are $\balls - t - 1$ tails in the first $\ell-1$ steps and a tail on the $\ell$th.
  The number of ways this can occur is ${\ell-1 \choose \balls-t-1} = {\ell-1 \choose \ell-(\balls-t)}$.
  The probability of each of these ways is $(1-\alpha)^{\balls-t} \alpha^{\ell - (\balls-t)}$.
  Thus the total probability of stopping is $\sum_{\ell=\balls-t}^{\balls} {\ell - 1 \choose \ell - (\balls - t)} (1 - \alpha)^{\balls - t} \alpha^{\ell - (\balls - t)}$.
  A change of variables with $j = \ell - (\balls -t)$ proves the fact.
\end{proof}

\begin{lemma} \label{lemma:lower-case-2}
  Given $\balls \geq 0$, $\bins=2$, $\dist \in \simplex$, and $\load \geq 1$, let $\dist' = \left(1-\loadnorm, \loadnorm\right)$ and let $\maxload'$ be the load of bin $2$ when throwing $\balls$ balls independently according to $\dist'$.
  Then $\Pr[\maxload \geq \load] \geq \Pr[\maxload' \geq \load]$.
\end{lemma}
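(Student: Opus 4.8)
The plan is to reduce the lemma to a clean one-line comparison between binomial tails and then prove that comparison by a change of variables. Writing $\dist=(\dist_1,\dist_2)$ with $\dist_1+\dist_2=1$, the load of bin~$1$ is $X \sim \bino(\balls,\dist_1)$ and the load of bin~$2$ is $\balls-X \sim \bino(\balls,\dist_2)$, so $\maxload=\max\{X,\balls-X\}$; meanwhile $\maxload' \sim \bino(\balls,\loadnorm)$ by construction of $\dist'$. Two regimes are immediate. If $\balls \ge 2\load-1$, then two bins cannot both hold at most $\load-1$ balls, so $\Pr[\maxload\ge\load]=1$ and there is nothing to prove; if $\balls \le \load-1$, then $\Pr[\maxload'\ge\load]=0$ and again we are done. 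So assume $\load \le \balls \le 2\load-2$. In this range the events $\{X\ge\load\}$ and $\{\balls-X\ge\load\}$ are disjoint (they would force $\balls\ge 2\load$), hence
\[ \Pr[\maxload\ge\load] \;=\; \Pr[X\ge\load] + \Pr[\balls-X\ge\load] \;=\; g(\dist_1)+g(\dist_2), \]
where $g(\alpha) \coloneqq \Pr[\bino(\balls,\alpha)\ge\load]$. Since $\Pr[\maxload'\ge\load]=g(\loadnorm)$, the lemma is now exactly the inequality $g(\dist_1)+g(\dist_2)\ge g(\loadnorm)$ with $\loadnorm=(\dist_1^{\load}+\dist_2^{\load})^{1/\load}$.

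To prove this I would use the integral (regularized incomplete beta) representation of the binomial tail, $g(\alpha) = \load\binom{\balls}{\load}\int_0^{\alpha} u^{\load-1}(1-u)^{\balls-\load}\,du$, valid since $\balls\ge\load$; the common prefactor is irrelevant, so it suffices to compare integrals of $h(u)\coloneqq u^{\load-1}(1-u)^{\balls-\load}$. Assume without loss of generality $\dist_1\ge\dist_2$, so $\loadnorm\ge\dist_1$, and rewrite the target $\int_0^{\dist_1}h+\int_0^{\dist_2}h \ge \int_0^{\loadnorm}h$ as $\int_0^{\dist_2}h(v)\,dv \ge \int_{\dist_1}^{\loadnorm}h(u)\,du$. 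Now apply the substitution $v=(u^{\load}-\dist_1^{\load})^{1/\load}$, which sends $u=\dist_1$ to $v=0$ and $u=\loadnorm$ to $v=\dist_2$ and satisfies $v^{\load-1}\,dv=u^{\load-1}\,du$. Under it the right-hand integral becomes $\int_0^{\dist_2} v^{\load-1}(1-u(v))^{\balls-\load}\,dv$ with $u(v)=(v^{\load}+\dist_1^{\load})^{1/\load}\ge v$; because $\balls\ge\load$, the factor $(1-\cdot)^{\balls-\load}$ is nonincreasing, so $(1-u(v))^{\balls-\load}\le(1-v)^{\balls-\load}$ and the right-hand integral is at most $\int_0^{\dist_2}h(v)\,dv$, which is exactly what we want.

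The main obstacle is really the single inequality $g(\dist_1)+g(\dist_2)\ge g(\loadnorm)$: the ``more extreme'' single bin with probability $\loadnorm\ge\dist_1$ has a larger tail than either original bin, so naive monotonicity of $g$ pushes the wrong way, and one must exploit that $\loadnorm^{\load}=\dist_1^{\load}+\dist_2^{\load}$ \emph{exactly}. The substitution above is the device that converts this algebraic identity into the comparison, and the inequality $u(v)\ge v$ together with $\balls\ge\load$ is precisely where it pays off. A useful sanity check on the direction is the leading-order behavior $g(\alpha)\approx\binom{\balls}{\load}\alpha^{\load}$, under which $g(\dist_1)+g(\dist_2)\approx\binom{\balls}{\load}\loadnorm^{\load}\approx g(\loadnorm)$; the two sides agree to first order, so the substitution is only controlling the correction term. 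If one prefers to stay fully discrete and in the paper's idiom, the same comparison can instead be extracted from Fact~\ref{fact:binom-rewrite} applied at $\alpha=\loadnorm$ and $t=\load-1$, but the change-of-variables argument is the cleanest route.
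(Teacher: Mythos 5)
Your proof is correct. The skeleton matches the paper's: the same corner cases, the same disjointness decomposition $\Pr[\maxload \geq \load] = g(\dist_1)+g(\dist_2)$ with $g(\alpha)=\Pr[\bino(\balls,\alpha)\geq\load]$, and the same target inequality $g(\dist_1)+g(\dist_2)\geq g(\loadnorm)$. Where you diverge is in how you prove that inequality. The paper rewrites both sides via Fact~\ref{fact:binom-rewrite} as finite sums $\sum_{j=0}^{\balls-\load}\binom{\load-1+j}{j}(\cdot)$ and compares them term by term, using $\loadnorm^{\load}=\dist_1^{\load}+\dist_2^{\load}$ together with $1-\loadnorm\leq\min\{\dist_1,\dist_2\}$ to get $\loadnorm^{\load}(1-\loadnorm)^j\leq\dist_1^{\load}\dist_2^j+\dist_2^{\load}\dist_1^j$. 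You instead pass to the incomplete-beta representation $g(\alpha)=\load\binom{\balls}{\load}\int_0^{\alpha}u^{\load-1}(1-u)^{\balls-\load}\,du$ and reduce to $\int_0^{\dist_2}h\geq\int_{\dist_1}^{\loadnorm}h$, which the substitution $v=(u^{\load}-\dist_1^{\load})^{1/\load}$ settles via $u(v)\geq v$ and monotonicity of $(1-t)^{\balls-\load}$. The two arguments exploit exactly the same two structural facts (the identity $\loadnorm^{\load}=\dist_1^{\load}+\dist_2^{\load}$ and the norm comparison $\loadnorm\geq\|\dist\|_\infty$), so they are close cousins; yours trades the combinatorial identity of Fact~\ref{fact:binom-rewrite} for a calculus identity and a one-line change of variables, which is arguably cleaner and avoids introducing the negative-binomial counting argument, at the cost of invoking the (standard, but external) beta-integral formula for the binomial tail. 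Both are complete and rigorous; all the boundary conditions you need ($\load\leq\balls\leq 2\load-2$, $\loadnorm\leq 1$, $u(v)\leq\loadnorm\leq 1$ so the monotonicity step is legitimate) do hold in the case you restrict to.
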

\begin{proof}
  Let us first consider corner cases.
  If $\balls \leq \load-1$ then both sides are zero and the inequality is satisfied.
  Otherwise, if $\dist_1 \in \{0,1\}$ or if $\balls \geq 2\load - 1$ then the left side equals one and the inequality is satisfied.

  Now assume $0 < \dist_1 < 1$ and $\load \leq \balls \leq 2\load-2$.
  This implies $0 \leq \balls-\load \leq \load-2$.
  Let $X \sim \bino(\balls,\dist_1)$ and $Y \sim \bino(\balls,\dist_2)$.
  \begin{align}
    \Pr[\maxload \geq \load]
    &= \Pr[X \geq \load] + \Pr[X \leq \balls - \load]  & \text{disjoint events}  \nonumber \\
    &= \Pr[Y \leq \balls - \load] + \Pr[X \leq \balls - \load]  \nonumber \\
    &= \sum_{j=0}^{\balls-\load} {\load - 1 + j \choose j} \left( \dist_2^{\load} \dist_1^j + \dist_1^{\load} \dist_2^j \right)
       &\text{using Fact \ref{fact:binom-rewrite}.} \label{eqn:case2-lhs}
  \end{align}
  Meanwhile, let $X' \sim \bino(\balls, 1 - \loadnorm)$:
  \begin{align}
    \Pr[\maxload' \geq \load]
    &= \Pr[X' \leq \balls - \load]  \nonumber \\
    &= \sum_{j=0}^{\balls - \load} {\load - 1 + j \choose j} \loadnorm^{\load} (1 - \loadnorm)^{j} . \label{eqn:case2-rhs}
  \end{align}
  By inequality of $\load$-norm and $\infty$-norm, we have $\max\{\dist_1,\dist_2\} \leq \loadnorm$, so $1 - \loadnorm \leq \min\{\dist_1,\dist_2\}$.
  So
  \begin{align*}
    \loadnorm^{\load} (1 - \loadnorm)^j
    &=    \left(\dist_1^{\load} + \dist_2^{\load}\right) (1-\loadnorm)^j  \\
    &\leq \dist_1^{\load} \dist_2^j + \dist_2^{\load} \dist_1^j .
  \end{align*}
  Thus (\ref{eqn:case2-lhs}) is at least (\ref{eqn:case2-rhs}), as claimed.
\end{proof}

We will use Lemma \ref{lemma:lower-case-2} to iteratively compare the following variables.
\begin{definition}[$\maxloadparam{\distq}{j}$]
  When throwing $\balls$ balls into $\bins$ bins independently according to $\distq \in \simplex$, for any $j \in [\bins]$, define $\maxloadparam{\distq}{j}$ to be the maximum load of any bin in $\{j,\dots,\bins\}$.
  In particular, $\maxload = \maxloadparam{\dist}{1}$.
\end{definition}

\begin{lemma} \label{lemma:modify-step}
  Given $\balls, \bins \geq 2, \load$, any $\distq \in \simplex$, and any $j \in \{1,\dots,\bins-1\}$, define $\distq'$ as follows:
  $\distq'_{\ell} = \distq_{\ell}$ for all $\ell \not\in \{j,j+1\}$, and
  \begin{align*}
    \distq'_j     &= \distq_j + \distq_{j+1} - \|(\distq_j, \distq_{j+1})\|_{\load}  \\
    \distq'_{j+1} &= \|(\distq_j, \distq_{j+1})\|_{\load} .
  \end{align*}
  Then $\Pr[\maxloadparam{\distq}{j} < \load] \leq \Pr[\maxloadparam{\distq'}{j+1} < \load]$.
\end{lemma}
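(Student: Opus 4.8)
The plan is to reduce the claim to the two-bin Lemma~\ref{lemma:lower-case-2} by a coupling that isolates the pair of bins $\{j, j+1\}$. Observe first that $\distq'_j + \distq'_{j+1} = \distq_j + \distq_{j+1}$, so $\distq$ and $\distq'$ agree on the total mass $w := \distq_j + \distq_{j+1}$ assigned to the block $\{j, j+1\}$ and on every coordinate outside it. I would therefore throw each ball in two stages: first decide with probability $w$ whether it lands in the block or elsewhere, assigning it among the remaining bins with the common conditional weights $\distq_\ell/(1-w)$ in the ``elsewhere'' case; only if it falls in the block does a second stage place it in bin $j$ or bin $j+1$. Since the first stage and the ``elsewhere'' assignment are identical under $\distq$ and $\distq'$, the two processes can be coupled so that the same balls land in the block and the same balls land in each bin of $\{j+2, \dots, \bins\}$.

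Condition on this shared randomness: let $s$ be the number of block-balls, let $L$ be the maximum load over $\{j+2, \dots, \bins\}$ (with $L = 0$ when $j = \bins - 1$), and let $\ell_j, \ell_{j+1}$ (resp.\ $\ell'_{j+1}$) be the loads of the block bins under $\distq$ (resp.\ of bin $j+1$ under $\distq'$). Under the coupling, $(s, L)$ is the same random object in both processes, while $\maxloadparam{\distq}{j} = \max\{L, \ell_j, \ell_{j+1}\}$ and $\maxloadparam{\distq'}{j+1} = \max\{L, \ell'_{j+1}\}$ differ only in how the $s$ block-balls are split between bins $j$ and $j+1$. Given $s$, that split is a two-bin experiment with $s$ balls and distribution $a := (\distq_j/w, \distq_{j+1}/w)$ under $\distq$ and $a' := (\distq'_j/w, \distq'_{j+1}/w)$ under $\distq'$, and it is conditionally independent of $L$.

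The decisive computation is that $a'$ is precisely the distribution that Lemma~\ref{lemma:lower-case-2} produces from $a$: indeed $\|a\|_{\load} = \|(\distq_j, \distq_{j+1})\|_{\load}/w = \distq'_{j+1}/w = a'_2$, and $a'_1 = 1 - a'_2$ because $a'_1 + a'_2 = (\distq'_j + \distq'_{j+1})/w = 1$. Applying Lemma~\ref{lemma:lower-case-2} with $s$ balls therefore gives $\Pr[\max\{\ell_j, \ell_{j+1}\} \geq \load \mid s] \geq \Pr[\ell'_{j+1} \geq \load \mid s]$. Combining with the two max decompositions — when $L \geq \load$ both conditional events have probability one, and when $L < \load$ the term $L$ may be dropped from both maxima — yields $\Pr[\maxloadparam{\distq}{j} \geq \load \mid s, L] \geq \Pr[\maxloadparam{\distq'}{j+1} \geq \load \mid s, L]$. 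Averaging over the common law of $(s, L)$ and passing to complements gives $\Pr[\maxloadparam{\distq}{j} < \load] \leq \Pr[\maxloadparam{\distq'}{j+1} < \load]$, as required. I expect the main obstacle to be stating the conditional-independence step cleanly — verifying that, given $s$, the block split has the claimed binomial law and is independent of $L$ in each process — together with dispatching the degenerate case $w = 0$, where no ball ever enters the block and the two sides coincide exactly.
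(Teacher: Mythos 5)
Your proof is correct and follows essentially the same route as the paper's: condition on the number of balls landing in the block $\{j,j+1\}$, note that the split within the block is an independent two-bin experiment whose normalized distribution under $\distq'$ is exactly the Lemma~\ref{lemma:lower-case-2} modification of the one under $\distq$ (by homogeneity of the $\load$-norm), and that the loads outside the block are identically distributed. Your coupling phrasing and the explicit checks of the norm scaling and the $w=0$ case are presentational refinements of the same argument.
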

\begin{proof}
  We will condition on $t$ balls total landing in bins $\{j,j+1\}$.
  To that end, let $A_t$ be the maximum load when throwing $t$ balls into bins $\{j,j+1\}$ according to the conditional distribution $(\distq_j,\distq_{j+1})/(\distq_j+\distq_{j+1})$.
  Let $A'_t$ be the load of bin $j+1$ when throwing $t$ balls into $\{j,j+1\}$ according to $(\distq'_j,\distq'_{j+1})/(\distq'_j + \distq'_{j+1})$.
  Lemma \ref{lemma:lower-case-2} implies $\Pr[A_t < \load] \leq \Pr[A'_t < \load]$.

  Similarly, let $D_{\balls-t}$ (respectively, $D'_{\balls-t}$) be the maximum load of bins $\{j+2,\dots,\bins\}$ when throwing $\balls-t$ balls according to $\distq$ (respectively, $\distq'$) conditioned on landing in $[\bins] \setminus \{j,j+1\}$.
  (If $j = \bins-1$, define $D_{\balls-t} = D'_{\balls-t} = 0$.)
  We observe that $D_{\balls-t}$ and $D'_{\balls-t}$ are identically distributed because $\distq$ and $\distq'$ are identical outside of $\{j,j+1\}$.
  Let $f(t) = \Pr[ \bino(\balls,\distq_i+\distq_j) = t]$.
  Then
  \begin{align*}
    \Pr[\maxloadparam{\distq}{j} < \load]
    &=    \sum_{t=0}^{\balls} f(t) \Pr[A_t < \load]  \Pr[D_{\balls-t} < \load]  \\
    &=    \sum_{t=0}^{\balls} f(t) \Pr[A_t < \load]  \Pr[D'_{\balls-t} < \load]  & \text{observed above}  \\
    &\leq \sum_{t=0}^{\balls} f(t) \Pr[A'_t < \load] \Pr[D'_{\balls-t} < \load]  & \text{Lemma \ref{lemma:lower-case-2}}  \\
    &=    \Pr[\maxloadparam{\distq'}{j+1} < \load] .
  \end{align*}
\end{proof}

\begin{proof}[Proof of Proposition \ref{prop:lower-bound}]
  We show $\Pr[\maxload < \load] \leq \Pr[\bino(\balls, \loadnorm) < \load]$.
  If $\bins=1$ the result is immediate.

  Apply Lemma \ref{lemma:modify-step} to $\dist$ at $j=1$.
  We obtain $\dist'$ where $\dist'_2 = \|(\dist_1,\dist_2)\|_{\load} = \left(\dist_1^{\load} + \dist_2^{\load}\right)^{1/\load}$ and the guarantee $\Pr[\maxload < \load] \leq \Pr[\maxloadparam{\dist'}{2} < \load]$.
  Next apply Lemma \ref{lemma:modify-step} to $\dist'$ at $j=2$.
  We obtain $\dist''$ where $\dist''_3 = \|(\dist'_2, \dist_3)\|_{\load} = \left(\dist_1^{\load} + \dist_2^{\load} + \dist_3^{\load}\right)^{1/\load} = \|(\dist_1,\dist_2,\dist_3)\|_{\load}$ and the guarantee $\Pr[\maxload < \load] \leq \Pr[\maxloadparam{\dist''}{3} < \load]$.

  Continuing, by induction, at the final step we obtain some $\distq$ with $\distq_{\bins} = \loadnorm$ and the guarantee $\Pr[\maxload < \load] \leq \Pr[\maxloadparam{\distq}{\bins} < \load]$.
  Now we observe that $\maxloadparam{\distq}{\bins}$ is simply the load of the $\bins$th bin, so it is distributed $\bino(\balls, \loadnorm)$.
\end{proof}

\paragraph{Remark.}
The above elementary proof is based on a more concise version that utilizes generating functions to analyze $\wait$.
This proof appeared at \citep{esg2021bounding}.

\section{Illustrations} \label{sec:sims}
We illustrate Theorem \ref{thm:main} with some simulation results for the uniform distribution $\dist$, Figure \ref{fig:sim-unif}, and a non-uniform distribution $\dist$ proportional to $(1,2,\dots,\bins)$, Figure \ref{fig:sim-nonunif}.
Each curve plots the empirical probability of a $\load$-loaded bin as we vary the number of balls $\balls$ thrown.
The horizontal axes are parameterized by $\myrho = \frac{\balls \loadnorm}{\load}$.

We have derived upper and lower bounds with no dependence on the number of bins $\bins$ or the details of $\dist$ except through $\loadnorm$.
Simulations support that this is almost exactly true of the exact frequency as well:
\emph{(a)} In both Figures \ref{fig:sim-unif} and \ref{fig:sim-nonunif}, the top two plots are almost identical to the bottom two plots, suggesting that $\bins$ has almost no impact on $\Pr[\maxload \geq \load]$ once we control for $\myrho$.
\emph{(b)} All plots in Figure \ref{fig:sim-nonunif} are almost identical to their counterparts in Figure \ref{fig:sim-unif}, suggesting that the structure of $\dist$ has little impact on $\Pr[\maxload \geq \load]$ once we control for $\myrho$.

\begin{figure}[h!]
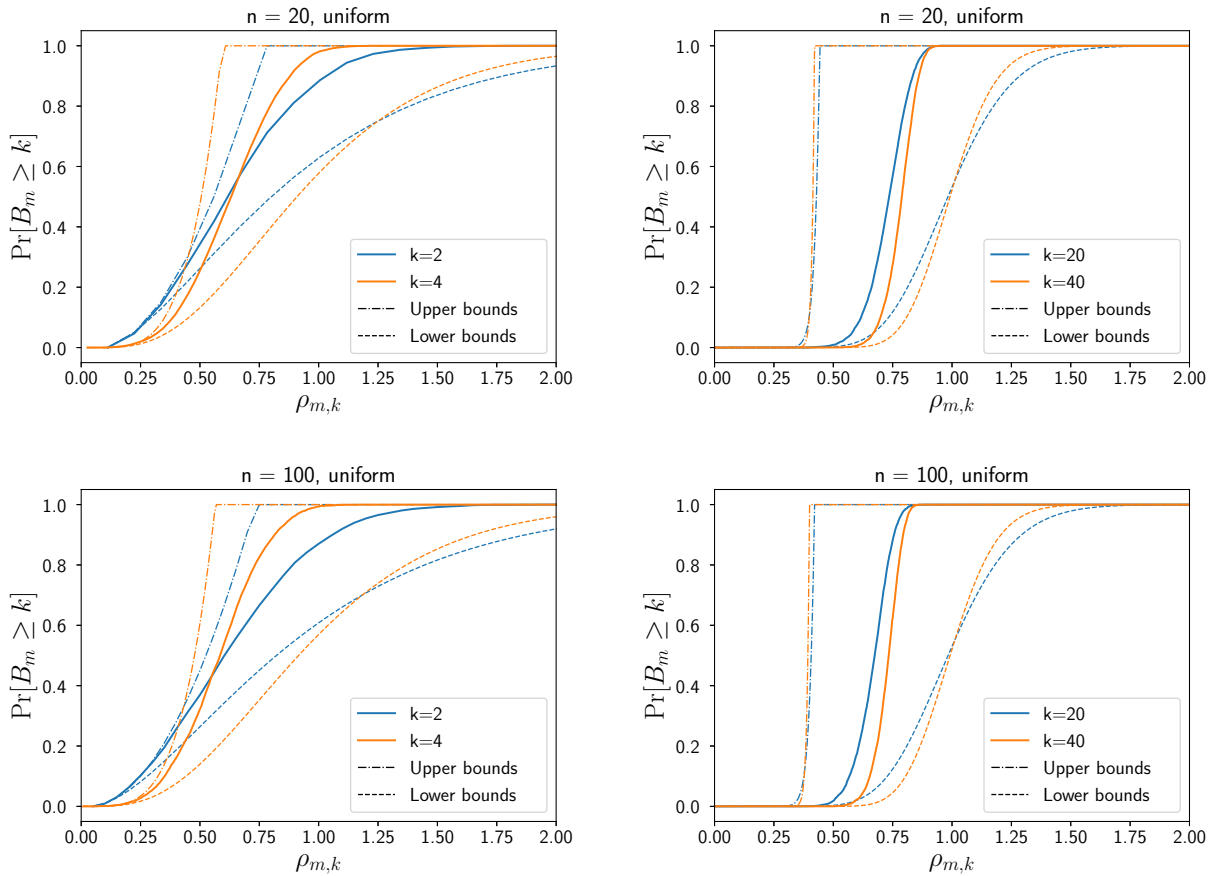

  \caption{\textbf{Uniform distribution:} Throwing $\balls$ balls into $\bins$ bins uniformly. For each fixed $\load$, we vary $\balls$ and plot the empirical frequency for $\Pr[\maxload \geq \load]$ across 5000 trials (solid lines) versus $\myrho = \frac{\balls \|\dist\|_{\load}}{\load}$. Dashed lines plot the upper and lower bounds of Theorem \ref{thm:main}. In all settings, but particularly as $\load$ grows, we observe a phase transition where the chance of a $k$-loaded bin quickly transitions from almost zero to almost one.}
  \label{fig:sim-unif}
  \centering
  \begin{subfigure}[b]{0.49\textwidth}
    \centering
    \resizebox{\textwidth}{!}{\input{figs/n20-k2,4-unif.pgf}}
  \end{subfigure}
  \hfill
  \begin{subfigure}[b]{0.49\textwidth}
    \centering
    \resizebox{\textwidth}{!}{\input{figs/n20-k20,40-unif.pgf}}
  \end{subfigure}

  \begin{subfigure}[b]{0.49\textwidth}
    \centering
    \resizebox{\textwidth}{!}{\input{figs/n100-k2,4-unif.pgf}}
  \end{subfigure}
  \hfill
  \begin{subfigure}[b]{0.49\textwidth}
    \centering
    \resizebox{\textwidth}{!}{\input{figs/n100-k20,40-unif.pgf}}
  \end{subfigure}
\end{figure}

\begin{figure}[h!]
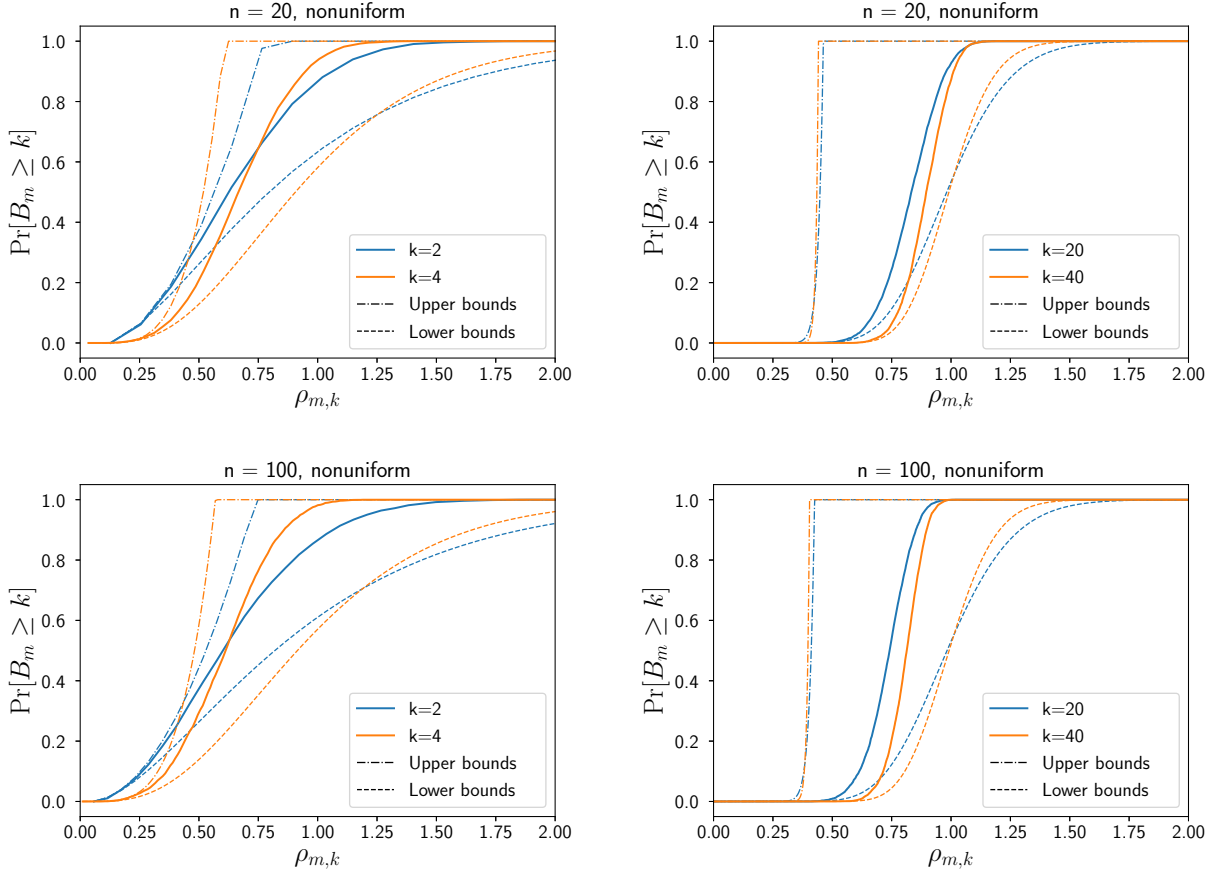

  \caption{\textbf{Nonuniform:} Throwing $\balls$ balls into $\bins$ bins according to the distribution $\tfrac{2}{\bins(\bins+1)}\left(1,\dots,\bins\right)$. Solid lines are empirical frequencies for $\Pr[\maxload \geq \load]$ from 5000 trials. Dashed lines plot the upper and lower bounds of Theorem \ref{thm:main}.}
  \label{fig:sim-nonunif}
  \centering
  \begin{subfigure}[b]{0.49\textwidth}
    \centering
    \resizebox{\textwidth}{!}{\input{figs/n20-k2,4-nonunif.pgf}}
  \end{subfigure}
  \hfill
  \begin{subfigure}[b]{0.49\textwidth}
    \centering
    \resizebox{\textwidth}{!}{\input{figs/n20-k20,40-nonunif.pgf}}
  \end{subfigure}

  \begin{subfigure}[b]{0.49\textwidth}
    \centering
    \resizebox{\textwidth}{!}{\input{figs/n100-k2,4-nonunif.pgf}}
  \end{subfigure}
  \hfill
  \begin{subfigure}[b]{0.49\textwidth}
    \centering
    \resizebox{\textwidth}{!}{\input{figs/n100-k20,40-nonunif.pgf}}
  \end{subfigure}
\end{figure}

\begin{figure}
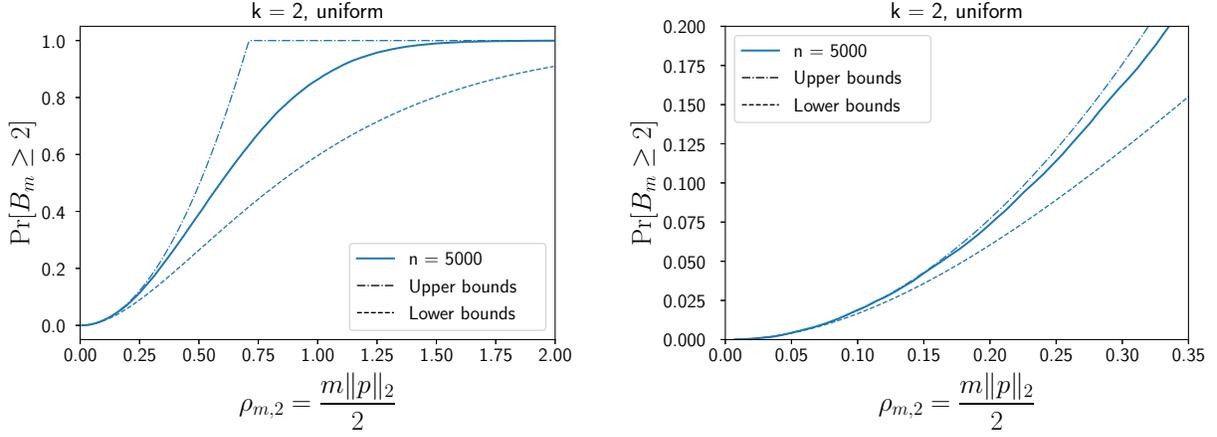

  \caption{\textbf{Birthday paradox:} Illustration of the the probability of a $2$-loaded bin when throwing $\balls$ balls uniformly into $\bins=5000$ bins. Solid lines are empirical frequencies for $\Pr[\maxload \geq 2]$ from 50000 trials. Dashed lines plot the upper and lower bounds of Theorem \ref{thm:main}. The bounds match to a ratio of $1 \pm o(1)$ as $\rho_2 \to 0$.}
  \label{fig:birthday}
  \centering
  \begin{subfigure}[b]{0.49\textwidth}
    \centering
    \resizebox{\textwidth}{!}{\input{figs/n5000-k2-unif.pgf}}
  \end{subfigure}
  \hfill
  \begin{subfigure}[b]{0.49\textwidth}
    \centering
    \resizebox{\textwidth}{!}{\input{figs/n5000-k2-unif-zoomed.pgf}}
  \end{subfigure}

\end{figure}

\section{Proofs of Corollaries} \label{sec:cor-proofs}

It will be useful to observe that $\myrho := \frac{\balls \loadnorm}{\load}$ is decreasing in $\load$.

\begin{proof}[Proof of Corollary \ref{cor:rho-tail}]

  The upper bound follows from the inequality\footnote{Proof of inequality: ${\balls \choose \load} \leq \tfrac{\balls^{\load}}{\load!}$, and from $e^{\load} = \sum_{i=0}^{\infty} \tfrac{\load^i}{i!} \geq \tfrac{\load^{\load}}{\load!}$ we get $\frac{1}{\load!} \leq \tfrac{e^{\load}}{\load^{\load}}$.} ${\balls \choose \load} \leq \left(\frac{e \cdot \balls}{\load}\right)^{\load}$, giving for integer $\load \geq 1$, $\Pr[\maxload \geq \load] \leq \delta_{\load} := \exp\left(- \load \ln \frac{e}{\myrho}\right)$.
  Because $\delta_{\load}$ is decreasing in $\load$, the bound follows for all real $\load \geq 1$: we have $\Pr[\maxload \geq \load] = \Pr[\maxload \geq \lceil \load \rceil] \leq \delta_{\lceil \load \rceil} \leq \delta_{\load}$.

  For the lower bound: by \citet{hoeffding1962probability},
  \[ \Pr[\bino(\balls, \loadnorm) < \load] \leq \exp\left(- \balls D(\tfrac{\load}{\balls}, \loadnorm) \right) \]
  where
  \begin{align*}
    \balls D(\tfrac{\load}{\balls}, \loadnorm)
    &= \load \ln\left(\frac{\load}{\balls \loadnorm}\right) + (\balls - \load)\ln\left(\frac{1-\tfrac{\load}{\balls}}{1-\loadnorm}\right)  \\
    &= \load \ln\left(\frac{1}{\myrho}\right) - (\balls - \load)\ln\left(\frac{\balls - \balls \loadnorm}{\balls - \load}\right)  \\
    &= \load \ln\left(\frac{1}{\myrho}\right) - (\balls - \load)\ln\left(1 + \frac{\load - \balls \loadnorm}{\balls - \load}\right)  \\
    &\geq \load \ln\left(\frac{1}{\myrho}\right) - (\balls - \load)\left(\frac{\load - \balls \loadnorm}{\balls - \load}\right)  \\
    &= \load \ln\left(\frac{1}{\myrho}\right) - \left(\load - \balls \loadnorm\right)  \\
    &= \load \left(\ln\left(\frac{1}{\myrho}\right) - 1 + \myrho\right)  \\
    &= \load \left(\ln\left(\frac{1}{e \cdot \myrho}\right) + \myrho\right)  .
  \end{align*}
  Therefore,
  \begin{align}
    \Pr[\bino(\balls, \loadnorm) < \load]
    &\leq \exp\left(- \load \left( \myrho - 1 -  \ln(\myrho) \right) \right) .  \label{eqn:chernoff-rhs}
  \end{align}
  We now argue that the bound (\ref{eqn:chernoff-rhs}) holds not just for integer $\load$ but also any real $\load \geq 1$.
  Let $\delta_\load$ denote the right side of (\ref{eqn:chernoff-rhs}).
  Then $\delta_{\load}$ is decreasing in $\load$, because $\myrho$ is increasing in $\load$.
  So we have $\Pr[\bino(\balls,\loadnorm) < \load] = \Pr[\bino(\balls,\loadnorm) < \lceil \load \rceil] \leq \delta_{\lceil \load \rceil} \leq \delta_{\load}$, as desired.
\end{proof}

\begin{proof}[Proof of Corollary \ref{cor:rho-phase2}]
  The upper bound follows directly by plugging $\myrho \leq \left(\frac{\delta^{1/\load}}{e}\right)$ into Corollary \ref{cor:rho-tail}.
  For the lower bound, for $\myrho \geq e^2$, we have $\myrho - \ln(e \cdot \myrho) \geq \myrho/2$.
  For $\myrho \geq 2\ln\left(\tfrac{1}{\delta^{1/k}}\right)$, we have
  \begin{align*}
    \exp\left(-k \left(\frac{\myrho}{2}\right) \right)
    &\leq \delta .
  \end{align*}
\end{proof}

\begin{proof}[Proof of Corollary \ref{cor:tail-bounds}]
  Upper bound: let $\load = \left(\tfrac{e}{\delta}\right) \load^*$.
  Then, using that $\load \geq \load^*$,
  \begin{align*}
    \myrho &=    \frac{\balls \loadnorm}{\load} \\
           &\leq \frac{\balls \|\dist\|_{\load^*}}{\load}  \\
           &=    \frac{\delta \cdot \balls \|\dist\|_{\load^*}}{e \cdot \load^*}  \\
           &=    \frac{\delta}{e}  \\
           &\leq \frac{\delta^{1/\load}}{e} ,
  \end{align*}
  and we apply Corollary \ref{cor:rho-phase2}.

  Lower bound: let $\load = \load^* / \max\left\{e^2, 2\ln(\tfrac{1}{\delta})\right\}$.
  Then
  \begin{align*}
    \myrho &=    \frac{\balls \loadnorm}{\load}  \\
           &\geq \frac{\balls \|\dist\|_{\load^*}}{\load}  \\
           &=    \max\left\{e^2, ~2\ln\left(\frac{1}{\delta}\right) \right\}  \\
           &\geq \max\left\{e^2, ~2\ln\left(\frac{1}{\delta^{1/\load}}\right) \right\} ,
  \end{align*}
  and we again apply Corollary \ref{cor:rho-phase2}.
\end{proof}

\begin{proof}[Proof of Corollary \ref{cor:tail-bounds-waiting}]
  Follows almost immediately from Corollary \ref{cor:rho-phase2} and $\delta \leq \delta^{1/\load}$.
\end{proof}

\subsection{The Expected Waiting Time}
We first show that Theorem \ref{thm:main} implies Corollary \ref{cor:expected-wait}, i.e. $\frac{\load}{\load+1} \frac{\load}{e \loadnorm} \leq \E \wait \leq \frac{\load}{\loadnorm}$.
We then discuss the tightness of the bounds.

\begin{proof}[Proof of Corollary \ref{cor:expected-wait}]
  Let the random variable $X_{\load}$ be the number of independent flips of a coin with bias $\loadnorm$ until $\load$ heads appear.
  Then $\Pr[X_{\load} \leq \balls] = \Pr[\bino(\balls,\loadnorm) \geq \load]$.
  By Theorem \ref{thm:main}, $\Pr[X_{\load} \leq \balls] \leq \Pr[\wait \leq \balls]$.
  In other words, $X_{\load}$ first-order stochastically dominates $\wait$, and it follows that $\E \wait \leq \E X_{\load} = \frac{\load}{\loadnorm}$.

  For the other direction: let $\alpha = \left(\tfrac{e \cdot \loadnorm}{\load}\right)$ and let $M = \frac{1}{\alpha}$.
  Recall from Theorem \ref{thm:main} and the inequality ${\balls \choose \load} \leq \left(\tfrac{e \cdot \balls}{\load}\right)^{\load}$ that we have $\Pr[\wait \leq \balls] \leq \left(\tfrac{e \cdot \balls \cdot \loadnorm}{\load}\right)^{\load} = \balls^{\load} \alpha^{\load}$.
  \begin{align*}
    \E \wait
    &=    \sum_{\balls=1}^{\infty} \balls \Pr[\wait = \balls]  \\
    &=    \sum_{\balls=1}^{\infty} \Pr[\wait \geq \balls]  \\
    &=    \sum_{\balls=0}^{\infty} \Pr[\wait > \balls]  \\
    &=    \sum_{\balls=0}^{\infty} 1 - \Pr[\wait \leq \balls]  \\
    &\geq \sum_{\balls=0}^{\infty} \max\left\{0, 1 - \balls^{\load} \alpha^{\load}\right\}  \\
    &\geq \int_{\balls=0}^{M} \left(1 - \balls^{\load} \alpha^{\load}\right) d\balls  \\
    &=    M - \frac{M^{\load+1} \alpha^{\load}}{\load+1}  \\
    &=    M - \frac{M}{\load+1}  \\
    &=    \frac{\load}{\load+1}M  \\
    &=    \left(\frac{\load}{e \cdot (\load+1)}\right) \frac{\load}{\loadnorm} .
  \end{align*}
\end{proof}

We present Corollary \ref{cor:expected-wait} as an exploration of the tightness of Theorem \ref{thm:main},
and show now that the denumerators of  the bounds for $\E T_\load$ are (essentially) best possible.
 For $\E\wait$ a simple integral represention is known (\cite{flajolet1992birthday}):
\[ \E T_\load=\int_0^\infty \prod_{i=1}^n \big(q_\load(p_it)e^{-p_it}\big)dt\;\;,\]
where $q_\load(t):=\sum_{i=0}^{k-1}\frac{t^i}{i!}$ is the sum of the first $\load $ terms of the exponential series.
Using this we show:
\begin{prop} Let $\load\geq 2$ be fixed. Then\\  
(1) $\loadnorm \E\wait \geq {c_\load}$, and $c_\load\coloneqq\int_0^\infty e^{-\frac{t^{\load}}{\load!}}\,dt=(\load!)^{1/\load} \Gamma(1+\frac{1}{\load}) $ is the best possible lower bound.\\
(2) $\loadnorm \E\wait \leq {\load}$, and $\load$ is the best possible upper bound.
\end{prop}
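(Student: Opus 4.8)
The plan is to dispatch the two inequalities separately, observing first that the upper bound (2) is essentially already in hand. The inequality $\loadnorm\,\E\wait \le \load$ is exactly the upper bound of Corollary \ref{cor:expected-wait}, so nothing new is needed there; and it is attained with equality by the degenerate distribution $\dist=(1)$ on $\bins=1$ bin, for which $\wait=\load$ deterministically and $\loadnorm=1$. (If one insists on $\bins\ge 2$, the same value is approached in the limit as $\dist\to(1,0,\dots,0)$.) Hence $\load$ cannot be lowered, establishing sharpness. The genuine content is therefore the lower bound (1), for which Corollary \ref{cor:expected-wait} gives only the weaker constant $\frac{\load^2}{e(\load+1)} < c_\load$, so I would work directly from the integral representation of \citet{flajolet1992birthday}.

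For the lower-bound inequality, the main step I would prove is the pointwise estimate
\begin{equation*}
  q_{\load}(x)\, e^{-x} \;\ge\; e^{-x^{\load}/\load!} \qquad (x \ge 0).
\end{equation*}
To see this, set $f(x) = \ln q_{\load}(x) - x + \tfrac{x^{\load}}{\load!}$, so $f(0)=0$. Using $q_{\load}'=q_{\load-1}$ and $q_{\load}-q_{\load-1}=\tfrac{x^{\load-1}}{(\load-1)!}$, a short calculation gives $f'(x) = \tfrac{x^{\load-1}}{(\load-1)!}\bigl(1 - \tfrac{1}{q_{\load}(x)}\bigr) \ge 0$, since $q_{\load}(x)\ge 1$ for $x\ge0$; hence $f\ge0$. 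Applying this factorwise to the integrand with $x=\dist_i t$ and summing exponents yields $\prod_{i} q_{\load}(\dist_i t)e^{-\dist_i t} \ge e^{-t^{\load}\loadnorm^{\load}/\load!}$, because $\sum_i \dist_i^{\load}=\loadnorm^{\load}$. Integrating and substituting $u=\loadnorm\, t$ then gives $\E\wait \ge \tfrac{1}{\loadnorm}\int_0^\infty e^{-u^{\load}/\load!}\,du = c_{\load}/\loadnorm$, which is the claim; the stated closed form $c_\load=(\load!)^{1/\load}\Gamma(1+\tfrac1\load)$ follows from the substitution $v=u^{\load}/\load!$ in the Gamma integral.

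The remaining and hardest task is to show $c_\load$ is the best possible lower bound, i.e.\ that it is approached. I would take the uniform distribution on $\bins$ bins, where $\loadnorm=\bins^{1/\load-1}$, and rescale by $t=\bins^{1-1/\load}u$ to write $\loadnorm\,\E\wait = \int_0^\infty \bigl(q_{\load}(\bins^{-1/\load}u)\,e^{-\bins^{-1/\load}u}\bigr)^{\bins}\,du$. Since $q_{\load}(x)e^{-x}=1-\tfrac{x^{\load}}{\load!}+O(x^{\load+1})$ as $x\to0$, the integrand converges pointwise to $e^{-u^{\load}/\load!}$, and I would pass to the limit by dominated convergence. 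The main obstacle is producing a single integrable dominating function uniform in $\bins$: on the region $\bins^{-1/\load}u\le1$ one can use the elementary bound $q_{\load}(x)e^{-x}\le 1-\tfrac{x^{\load}}{\load!}e^{-x}$ (keeping just the leading Poisson-tail term) to get $(\cdots)^{\bins}\le e^{-u^{\load}/(e\,\load!)}$, while the far tail $u>\bins^{1/\load}$ is controlled using that $q_{\load}(x)e^{-x}$ is decreasing in $x$ together with $\int_0^\infty q_{\load}(x)e^{-x}\,dx=\load$, which forces that contribution to be at most $\bins^{1/\load}\rho^{\bins-1}\load\to0$ for $\rho:=q_\load(1)e^{-1}<1$. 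Combining the two regions gives $\loadnorm\,\E\wait\to c_{\load}$, completing the sharpness argument.
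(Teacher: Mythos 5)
Your proof is correct, and the core of it—the pointwise bound $q_{\load}(x)e^{-x}\ge e^{-x^{\load}/\load!}$ applied factorwise inside the Flajolet--Gardy--Thimonier integral, followed by the substitution $u=\loadnorm t$—is exactly the paper's argument for the inequality in (1); you additionally write out the logarithmic-differentiation verification of that pointwise bound, which the paper only asserts. Where you genuinely diverge is in the two sharpness claims. For (1) the paper simply cites Klamkin--Newman for the fact that $\loadnorm\,\E\wait\to c_{\load}$ under the uniform distribution as $\bins\to\infty$, whereas you prove it from scratch: the rescaling $t=\bins^{1-1/\load}u$, the pointwise limit $e^{-u^{\load}/\load!}$, the dominating function $e^{-u^{\load}/(e\,\load!)}$ on $u\le\bins^{1/\load}$ via $q_{\load}(x)e^{-x}\le 1-\tfrac{x^{\load}}{\load!}e^{-x}$, and the tail estimate $\bins^{1/\load}\rho^{\bins-1}\load\to0$ using monotonicity of $q_{\load}(x)e^{-x}$ and $\int_0^\infty q_{\load}(x)e^{-x}\,dx=\load$ are all sound, so your version is self-contained at the cost of a page of analysis. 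For (2) the paper exhibits a family $p_1=a$, $p_2=\cdots=p_{\bins}=\tfrac{1-a}{\bins-1}$ and lets $\bins\to\infty$ to get $\loadnorm\,\E\wait\to\load$ for every fixed $a\in(0,1)$; your degenerate example $\dist=(1)$ attains equality outright and is simpler (the proposition places no restriction on $\bins$, and Corollary \ref{cor:expected-wait} allows $\bins=1$, so this is legitimate), though the paper's family shows the slightly stronger fact that the bound is approached even with $\loadnorm$ bounded away from $1$. The only cosmetic gap is that your parenthetical limit claim for $\dist\to(1,0,\dots,0)$ is asserted rather than proved, but it follows in one line from $\load\le\wait\le(\text{waiting time for bin }1)$ and $p_1\le\loadnorm\le1$.
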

\begin{proof} It is easy to see (using logarithmic differentiation) that  $q_\load(t)e^{-t}\geq e^{-\frac{t^\load}{\load!}}$ for $t\geq 0$.\\ 
(1) Therefore
\[ \loadnorm\E T_\load \geq \loadnorm\int_0^\infty \prod_{i=1}^n \big(e^{-\frac{p_i^\load t^\load}{\load!}}\big)dt= \loadnorm\int_0^\infty e^{-\frac{\loadnorm^\load t^\load}{\load!}} dt= c_\load\;\;\]
Thus $c_\load$ is a lower bound. It is best possible since it arises as the limit of $\loadnorm\E T_\load $
for the uniform distribution on bins as $\bins\longrightarrow \infty$  (\cite{klamkin1967extensions}) .\\
(2) We now already know that $\load$ is an upper bound.  Let $a\in(0,1)$ and let $p_1=a$, $p_2=\ldots=p_\bins=\frac{1-a}{n-1}$.  Then $\loadnorm^k=a^k+\frac{(1-a)^\load}{(\bins-1)^{\load-1}}$ and 
\[ \E T_\load =\int_0^\infty \prod_{i=1}^n \big(q_\load(p_it)e^{-p_it}\big)dt\geq \int_0^\infty  q_\load(at)e^{-at} e^{\frac{(1-a)^\load t^\load}{(\bins-1)^{\load-1} \load!}}dt\]
For $\bins \longrightarrow \infty$ then $\loadnorm \longrightarrow a$ and $\E T_\load\longrightarrow \frac{k}{a}$. Therefore the bound $k$ can not be lowered. 
\end{proof}
Thus the upper bound of Corollary \ref{cor:expected-wait} is best possible, and since 
$c_\load>\frac{\load}{e}$ and 
  \[ c_\load=\frac{\load}{e}\Big(1+\frac{1}{2}\frac{\log2\pi \load}{\load}-\frac{\gamma}{\load}+O\left(\frac{1}{\load^2}\right)\Big) , \]
also the lower bound of  Corollary \ref{cor:expected-wait} is essentially the best possible.

\section{Generalizations} \label{sec:inf}

Finally we remark that Theorem \ref{thm:main} can be generalized in two directions.
\paragraph{(1) Maximum load in a restricted set of bins.} Suppose that we are only interested in the maximum load (resp. the waiting time for maximum load $\load$) of $ 
\binsr<\bins$ bins (say, bins $1,\ldots,\binsr$) , where $p_1+\ldots+p_\binsr<1$.  Denote by $\maxloadr$ the maximum load in bins $1,\ldots,\binsr$ at time $\balls$, and let $\loadnormr$ denote the $\load$-norm of $(p_1,\ldots,p_\binsr)$. Then
    \[ \Pr[\bino(\balls, \loadnormr) \geq \load] ~\leq~ \Pr[\maxloadr \geq \load] ~\leq~ {\balls \choose \load}\loadnormr^{\load} . \]
\begin{proof} The proof for the upper bound applies essentially unchanged. For the lower bound we may assume that there is only one additional bin (numbered $\binsr +1$), which is chosen with probability $p_{\binsr+1}=1-(p_1+\ldots+p_\binsr)$. Let $\loadrp$ denote the random variable ``load of bin $\binsr+1$ after $\balls$ balls". Then $\loadrp$ is binomially distributed with parameters $\balls$ and $p_{\binsr+1}$, and conditionally on $\loadrp=\ell$ the joint distribution of the loads in bins $1,\ldots,\binsr$ is multinomial with parameters $\balls -\ell$ and $q_1=\frac{p_1}{1-p_{\binsr+1}},\ldots, q_r=\frac{p_r}{1-p_{\binsr+1}}$. Conditioning on $\loadrp$, and using Theorem \ref{thm:main}, we get
\begin{align*}
\Pr[ \maxloadr \geq \load]&=\sum_{\ell=0}^\balls \Pr[ \maxloadr \geq \load\mid \loadrp=\ell] \Pr[\loadrp=\ell]\\
&\geq  \sum_{\ell=0}^{\balls}  \Pr[\bino(\balls-\ell, \qnorm) \geq \load] \Pr[\loadrp=\ell]\\
&=\sum_{\ell=0}^\balls \sum_{ j=\load  }^{\balls -\ell} {\balls \choose \ell} p_{\binsr+1}^\ell(1-p_{\binsr+1})^{\balls - \ell}{\balls-\ell \choose j} \qnorm^j(1-\qnorm)^{\balls -\ell -j}\\
&=\sum_{j=\load}^\balls \sum_{\ell=0}^{\balls-j} \frac{\balls!}{j!\ell!( \balls-\ell-j)!} p_{\binsr+1}^\ell \loadnormr^j (1-p_{\binsr+1}-\loadnormr)^{\balls -\ell -j} \\
&=\sum_{j=\load}^\balls {\balls \choose j} \loadnormr^j(1-\loadnormr)^{\balls-j}\\
&=  \Pr[\bino(\balls, \loadnormr) \geq \load]
\end{align*}
\end{proof} 

\paragraph{(2)  Maximum load in countably infinite many bins.} Let $p=p_1,p_2,...$ a probability distribution with infinite support.
Then Theorem \ref{thm:main} still holds, i.e. 
    \[ \Pr[\bino(\balls, \loadnorm) \geq \load] ~\leq~ \Pr[\maxload \geq \load] ~\leq~ {\balls \choose \load}\loadnorm^{\load} . \]
    \begin{proof}  By the preceding remark we have that for any $\binsr\geq 1$
   \[ \Pr[\bino(\balls, \loadnormr) \geq \load] ~\leq~ \Pr[\maxloadr \geq \load] ~\leq~ {\balls \choose \load}\loadnormr^{\load}  \] 
   Passing to the limit 
   $r\longrightarrow \infty$ proves the claim.
\end{proof}

Applying the argument (1) again to the countably-infinite setting gives the generalization in both directions simultaneously.

\bibliographystyle{plainnat}
\bibliography{citations}

\end{document}